\numberwithin{equation}{section}
\newtheorem{theorem}{Theorem}
\newtheorem{corollary}[theorem]{Corollary}
\newtheorem{lemma}[theorem]{Lemma}
\newtheorem{claim}{Claim}
\newcommand{\Nnn}{{\mathbb N}}
\newcommand{\Rrr}{{\mathbb R}}
\begin{document}
\title{Infinite random geometric graphs}
\author{Anthony Bonato}
\address{Department of Mathematics\\
Ryerson University\\
Toronto, ON\\
Canada, M5B 2K3} \email{abonato@ryerson.ca}
\author{Jeannette Janssen}
\address{Department of Mathematics and Statistics\\
Dalhousie University\\
Halifax, NS\\
Canada, B3H 3J5}
\email{janssen@mathstat.dal.ca}
\keywords{graphs, geometric graphs, adjacency property, random graphs, metric spaces, isometry}
\thanks{The authors gratefully acknowledge support from NSERC and MITACS}
\subjclass{05C75, 05C80, 46B04, 54E35}

\begin{abstract}
We introduce a new class of countably infinite random geometric graphs, whose vertices $V$ are points in a metric space, and
vertices are adjacent independently with probability $p\in (0,1)$ if the metric distance between the vertices is below a given
threshold.
If $V$ is a countable dense set in $\mathbb{R}^n$ equipped
with the metric derived from the $L_{\infty}$-norm, then it is shown that with probability $1$
such infinite random geometric
graphs have a unique isomorphism type. The isomorphism type, which we call $GR_n,$ is characterized by a
geometric analogue of the existentially closed adjacency
property, and we give a deterministic construction of $GR_n.$
In contrast, we show that infinite random geometric graphs in
$\mathbb{R}^2$ with the Euclidean metric are not necessarily isomorphic.
\end{abstract}

\maketitle

\section{Introduction\label{intro}}

The last decade has seen the emergence of the study of large-scale complex networks such as the web graph consisting of web pages and the links between them, the friendship network on Facebook,
and networks of interacting proteins in a cell. Several new random graph models were proposed for such networks, and existing models were modified in order to fit the
data gathered from these real-life networks. See the books~\cite{bonato,clbook} for surveys of such models. A recent trend in
stochastic graph modelling is the study of {\sl geometric graph
  models}. In geometric graph models, vertices are embedded in a
metric space, and the formation of edges is influenced by the relative
position of the vertices in this space. Geometric graph models have found
applications in modelling wireless networks (see \cite{frieze,goel}),
and in modelling the web graph and other complex networks
(\cite{bon,FFV}). In real-world networks, the underlying metric space is a representation of the \lq\lq
hidden reality'' that leads to the formation of edges. Thus, for the
World Wide Web, web pages are embedded in a
high dimensional \emph{topic space}, where pages that are positioned
close together in the space contain similar content.

The graph model we study is a variation on the {\sl random geometric graph},
where vertices are chosen at random according to
a given probability distribution from a given metric space, and two
vertices are adjacent if the distance between the two vertices is no
larger than some fixed real number. Random geometric graphs have been
studied extensively in their own right (see, for example,
\cite{bol,ellis} and the book \cite{mpen}).

Analysis of stochastic graph models usually focusses on asymptotic
results, which hold for cases where the number of vertices is
sufficiently large.
An alternative approach is to study the {\sl
  infinite limit}; that is, the infinite graph that results when the number of
vertices reaches infinity. Studying the infinite limit is a well-known
tool for studying scientific models, and it can help to recognize large-scale
structure and long-term behaviour (see \cite{bj,klein1}). In this paper, we study the infinite limit of a
geometric graph model that is a geometric extension
of the classic random graph model $G(\mathbb{N},p)$.

One of the most studied examples of an infinite limit graph arising from a
stochastic model is the infinite random graph. The probability space
$G(\mathbb{N},p)$ consists of graphs with vertices $\mathbb{N},$ so that
each distinct pair of integers is adjacent independently with a fixed
probability
$ p\in (0,1).$ Erd\H{o}s and R\'{e}nyi \cite{er} discovered that with probability $1,$
all $G\in G(\mathbb{N},p)$ are isomorphic. A graph $G$ is \emph{existentially closed} (or \emph{e.c.}) if for all finite
disjoint sets of vertices $A$ and $B$ (one of which may be empty),
there is
a vertex $z\notin A\cup B$ adjacent to all vertices of $A$ and to no vertex of
$B.$ We say that $z$ is \emph{correctly joined}
to $A$ and $B.$ The unique isomorphism type of
countably infinite e.c.\ graph is named the \emph{infinite random graph}, or
the \emph{Rado} graph, and is written $R.$ See the surveys
\cite{cam,cam1} for additional background on $R.$

We now introduce the geometric graph model on which this paper is based.
Consider a metric space $S$
with distance function $$d:S \times S \rightarrow \mathbb{R},$$
$\delta\in \Rrr^+$,
a countable subset $V$ of $S$, and $p\in (0,1)$. The {\sl Local Area
  Random Graph $\mathrm{LARG}(V,\delta,p)$} has vertices $V,$ and
for each pair of vertices $u$ and $v$ with $d(u,v)<\delta$, an edge is
added independently with probability $p$. Note that $V$ may be either finite or infinite. The LARG model generalizes
well-known classes of random graphs.
For example, special cases of the $\mathrm{LARG}$ model include the
random geometric graphs (where $p=1$), and the binomial random graph
$G(n,p)$ (where $S$ has finite diameter $d$, and $\delta\ge d$).

In the case $V$ is infinite, our goals are to investigate what adjacency properties are satisfied by
graphs generated by the LARG model, and determine when the model generates a
unique isomorphism type of countable graph. We prove that with probability $1$, graphs in
$\mathrm{LARG}(V,\delta,p)$ satisfy a certain adjacency property which
is a metric analogue of the e.c.\ property; see
Theorem~\ref{random}. The so-called geometric e.c.\ property requires
that vertices $z$ correctly joined to $A$ and $B$ may be found as
close as we like to points in $V.$ Explicit examples of graphs with
the geometric e.c.\ property are given in Theorem~\ref{explicit}. For
metric spaces such as $\mathbb{R}^n$ with the $L_{\infty}$-metric, the
geometric e.c.\ property gives rise to a unique isomorphism type of
graph; see Theorems~\ref{thm:1D}, \ref{thm:1Disometry}, and
\ref{thm:nD}. The main tool here is a generalization of isometry
called a step-isometry. Our results are sensitive to the metric
used. Non-isomorphism results for other metrics are given in
Section~\ref{none}. In particular, we show in Theorem~\ref{non2} that in $\mathbb{R}^2$ with the Euclidean metric, there
exist non-isomorphic geometric e.c.\ graphs.

All graphs considered are simple, undirected, and countable unless
otherwise stated. If $S$ is a set of vertices in $G$, then we use
the notation $G[S]$ for the subgraph of $G$ induced by $S.$ We use the notation $G\le H$ if $G$ is an induced subgraph of $H.$ We refer to an isomorphism type as \emph{isotype}, and denote isomorphic graphs by $G\cong H.$ Given a metric space $S$ with distance function $d$, define the (open) \emph{ball of radius
$\delta$ around $x$} by
\[
B_\delta(x)=\{ u\in S:d(u,x)< \delta\}.
\]
We will sometimes just refer to $B_{\delta}(x)$ as a \emph{ball}.
A subset $V$ is \emph{dense} in $S$ if for every point $x\in S$, every ball
around $x$ contains at least one point from $V$.
We refer to $u \in S$ as points
or vertices, depending on the context. Throughout, let $\mathbb{N}$ denote the non-negative natural numbers, and $\mathbb{N}^+$ denote the positive natural numbers. For a reference on graph theory
the reader is directed to \cite{west}, while \cite{bryant} is a
reference on metric spaces.

\section{Geometrically e.c.~graphs}\label{geo}

As noted in the introduction, the unique isotype of the infinite random graph $R$ is
characterized by the e.c.~property. In this section we define a
geometric analogue of this property. As will be demonstrated in Section~3, this property characterizes the unique isotype of graphs
obtained from countable dense sets in $\Rrr^n$, provided we consider a particular
metric.

Let $G=(V,E)$ be a graph whose vertices are points in the metric space $S$ with metric $d.$ The graph $G$ is
\emph{geometrically e.c}.\ \emph{at level $\delta$} (or $\delta$-\emph{g.e.c.}) if for all
$\delta'$ so that $0<\delta'<\delta$, for all $x\in V$, and for all
disjoint finite sets $A$ and $B$ so that $A\cup B\in B_\delta(x)$, there
exists a vertex $z\not\in A\cup B\cup\{x\}$ so that
\begin{itemize}
\item[($i$)] $z$ is correctly
joined to $A$ and $B$,
\item[($ii$)] for all $u\in A\cup B$, $d(u,z)<\delta$, and
\item[($iii$)]
$d(x,z)<\delta'$.
\end{itemize}
This definition implies that $V$ is dense in itself.
Also, if $G$ is $\delta$-e.c.,
then $G$ is $\delta'$-e.c. for any $\delta'<\delta$.

The geometrically e.c.~property bears clear
similarities with the
e.c.\ property. The important differences are that a
correctly joined vertex must exist only for
sets $A$ and $B$ which are contained in an open ball with radius
$\delta$ and centre $x$, and it must be possible to choose the vertex
$z$ correctly joined to $A$ and $B$ arbitrarily close to $x$. See Figure~1.

\begin{figure}[h]
\begin{center}
\epsfig{figure=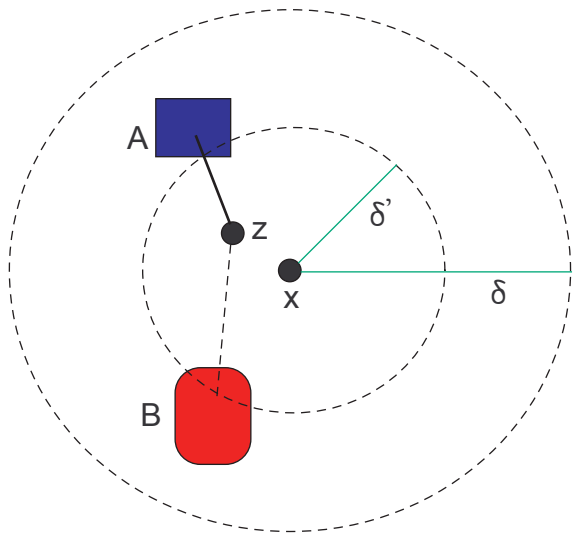} \caption{The $\delta$-g.e.c.\ property.}
\end{center}
\end{figure}

\begin{theorem}\label{random}
Let $(S,d)$ be a metric space and $V$ a countable subset of $S$ which
is dense in itself. If $\delta>0$ and $p\in (0,1),$ then with probability $1,$
$\mathrm{LARG}(V,\delta,p)$ is $\delta$-e.c.
\end{theorem}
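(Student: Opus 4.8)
The plan is to run the classical \lq\lq almost surely e.c.\rq\rq\ argument of Erd\H{o}s and R\'enyi, adapted to the metric setting, the only real work being to recast the defining condition of the $\delta$-g.e.c.\ property --- which quantifies over the continuum of values $\delta'\in(0,\delta)$ --- as a countable intersection of events. Call a triple $t=(x,A,B)$ a \emph{task} if $x\in V$ and $A,B$ are disjoint finite subsets of $V\cap B_\delta(x)$; since $V$ is countable there are only countably many tasks. For a task $t$ put $\rho_t=\max_{u\in A\cup B}d(u,x)<\delta$ (with the convention $\rho_t=0$ if $A\cup B=\emptyset$), and for $k\in\Nnn^+$ let $r_{t,k}=\min(1/k,\,\delta-\rho_t)>0$. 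Let $E_{t,k}$ be the event that some $z\in (V\cap B_{r_{t,k}}(x))\setminus(A\cup B\cup\{x\})$ is correctly joined to $A$ and $B$. A short triangle-inequality check shows that if every $E_{t,k}$ occurs then $\mathrm{LARG}(V,\delta,p)$ is $\delta$-g.e.c.: given $\delta'$ and a task $t$, pick $k$ with $1/k<\delta'$; the witness $z$ supplied by $E_{t,k}$ satisfies $d(x,z)<1/k<\delta'$, giving $(iii)$, and $d(u,z)\le d(u,x)+d(x,z)<\rho_t+(\delta-\rho_t)=\delta$ for every $u\in A\cup B$, giving $(ii)$, while $(i)$ and $z\notin A\cup B\cup\{x\}$ are built into $E_{t,k}$. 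Since there are only countably many pairs $(t,k)$, it suffices to prove $\Ppp(E_{t,k})=1$ for each one.

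Next I would fix such a pair, writing $t=(x,A,B)$, $r=r_{t,k}$, and $q=p^{|A|}(1-p)^{|B|}\in(0,1]$. Because $V$ is dense in itself, $x$ is a limit point of $V$, so $V\cap B_r(x)$ is infinite; deleting the finitely many points of $A\cup B\cup\{x\}$ leaves an infinite sequence $z_1,z_2,\dots$ of distinct vertices in $B_r(x)$. For each $j$ and each $u\in A\cup B$ one has $d(z_j,u)\le d(z_j,x)+d(x,u)<r+\rho_t\le\delta$, so in the LARG model the pair $\{z_j,u\}$ is an edge independently with probability $p$; hence the event $F_j$ that $z_j$ is correctly joined to $A$ and $B$ has probability exactly $q$. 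Moreover, for $j\neq j'$ the pairs $\{z_j,u\}$ and $\{z_{j'},u'\}$ (with $u,u'\in A\cup B$) are all distinct, because $z_j\neq z_{j'}$ and neither lies in $A\cup B$; thus $F_1,F_2,\dots$ depend on pairwise disjoint sets of independent edge-indicators and are mutually independent.

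Finally, by independence $\Ppp\big(\bigcap_{j\ge1}\overline{F_j}\big)=\prod_{j\ge1}(1-q)=0$, so $\Ppp(E_{t,k})\ge\Ppp\big(\bigcup_{j\ge1}F_j\big)=1$. Intersecting over the countably many pairs $(t,k)$ then shows that $\mathrm{LARG}(V,\delta,p)$ is $\delta$-g.e.c.\ with probability $1$. I do not expect a serious obstacle here: the argument is the standard Borel--Cantelli-type conclusion once the setup is right, and the only points requiring care are exactly those handled in the first step --- reducing the uncountable family of thresholds $\delta'$ to countably many events $E_{t,k}$, and the distance bookkeeping that simultaneously guarantees $q>0$ (equivalently, that the candidate vertices clustered near $x$ lie within distance $\delta$ of every point of $A\cup B$, so the governing edges really are independent coin flips) and that the produced witness meets constraints $(ii)$ and $(iii)$ together.
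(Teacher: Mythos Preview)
Your proof is correct and follows essentially the same approach as the paper: choose a radius small enough that any candidate $z$ near $x$ is automatically within $\delta$ of every point of $A\cup B$, use density to produce infinitely many candidates, conclude via independence that one is correctly joined almost surely, and then take a countable union over tasks. You are in fact slightly more careful than the paper on one point: the paper fixes $\delta'$ and then says only that there are countably many choices of $(x,A,B)$, leaving the reduction of the uncountable range of $\delta'$ to a countable family implicit, whereas your discretization $r_{t,k}=\min(1/k,\delta-\rho_t)$ makes this step explicit.
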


\begin{proof}
Fix $x\in V$, disjoint
finite subsets $A$ and $B$
in $B_{\delta}(x)\cap (V\setminus\{x\})$, and $0<\delta'<\delta$. Let
\[
\beta=\max\{d(x,v):v\in A\cap B\}.
\]
Then $\beta <\delta$. Let
  $\epsilon=\min\{ \delta-\beta,\delta'\}$.
Consider the set $Z=B_{\epsilon}(x)\cap V.$ Note that $\epsilon$ is
chosen so that for any $z\in Z$, $d(z,x)<\delta'$, and for all $u\in
A\cup B$,
\[
d(u,z)<d(u,x)+d(x,z)<\beta+\epsilon\le \delta.
\]

For any graph $G$ in $\mathrm{LARG}(V,\delta,p)$, the probability that
any vertex $z\in Z$ is correctly joined to $A$ and
$B$ equals $p^{|A|}(1-p)^{|B|}$. The probability that no vertex in $Z$
is correctly joined to $A$ and $B$ equals
\[
P=\prod_{z\in Z} 1-\left(p^{|A|}(1-p)^{|B|}\right).
\]
Since $V$ is dense in itself, $Z$ contains infinitely many points; hence, $P=0.$
As there are only countably many choices for $x$, $A$, and $B$, and a
countable union of measure $0$ sets is measure $0$, the proof
follows. \end{proof}

A graph $G=(V,E)$ whose vertices are points in the metric space $(S,d)$ has \emph{threshold} $\delta$ if for
all edges $uv\in E$, $d(u,v)<\delta$. A graph that is geometrically
g.e.c.~at level $\delta$ and has threshold $\delta$ is called a {\sl
geometric $\delta$-graph. } By definition, a graph $G$ generated by $\mathrm{LARG}(V,\delta,p)$ has threshold $\delta$,
and, if $V$ is countable and dense in itself,
$G$ is a geometric $\delta$-graph. Thus, this random graph model generates
geometric $\delta$-graphs.

We construct geometric $\delta$-graphs
deterministically as follows. Given $\delta>0$, a countable set
$V$ which is dense in itself,
and a linear ordering $\sigma:\Nnn\rightarrow V$ of $V$, define
$GR(V,\delta,\sigma)$ as the limit of a chain of finite graphs
$R_t$, where $R_t\leq R_{t+1}$ for any $t>1$, and
$\{\sigma(i):1\leq i\leq t\}\subseteq V(R_t)$.
Let $R_1$ be the trivial
graph with vertex set $\sigma(1)$.  Assume that $R_t$ is defined and
$\{\sigma(i):1\leq i\leq t\}\subseteq V(R_t).$

We now define $R_{t+1}.$
Enumerate all pairs $(A,x)$ so that $A\subseteq
V(R_t)$ and $x\in V(R_t)\setminus A$ so that $A\subseteq B_\delta(x)$, via a
lexicographic ordering based on $\sigma$. For each pair $(A,x)$, in
order, choose
$z=z_{A,x}$ to be the least index point in $V$ (according to $\sigma$) such
that $z$ has not been chosen for any previous pairs $(A,x)$,
\begin{equation}\label{ca}
B_\delta(z)\cap V(R_t) = B_\delta(x)\cap V(R_t),
\end{equation}
 and
$$d(z,x)<\min\{1/t,\delta\}.$$
Note that such a vertex exists as $V$ is dense and $R_t$ is finite. Join $z$ to all
vertices in $A$ and to no other vertices of $R_t$. If necessary, add
$\sigma(t+1)$ as an
isolated vertex to form the graph $R_{t+1}.$ Observe that by (\ref{ca}), $GR(V,\delta,\sigma)$ is a
$\delta$-threshold graph.

\begin{theorem}\label{explicit}
The graph $GR(V,\delta,\sigma)$ is
$\delta$-g.e.c.
\end{theorem}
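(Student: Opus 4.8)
The plan is to verify directly that the constructed graph $GR(V,\delta,\sigma)$ satisfies the three conditions in the definition of $\delta$-g.e.c. So fix a real number $\delta'$ with $0<\delta'<\delta$, a vertex $x$, and disjoint finite sets $A$ and $B$ with $A\cup B\subseteq B_\delta(x)$. The key observation is that all of $x$, $A$, and $B$ are finite objects, so they appear together in $V(R_t)$ for all sufficiently large $t$; moreover, we may take $t$ large enough that additionally $1/t<\delta'$. Having fixed such a $t$, I would look at the stage $R_t\to R_{t+1}$ of the construction, and in particular at the pair $(A,x)$, which gets enumerated at some point during that stage. The construction then produces a vertex $z=z_{A,x}\in V$ with the defining properties: it was not used for any earlier pair, it satisfies $B_\delta(z)\cap V(R_t)=B_\delta(x)\cap V(R_t)$ by~\eqref{ca}, it satisfies $d(z,x)<\min\{1/t,\delta\}\le 1/t<\delta'$, and $z$ is joined to exactly the vertices of $A$ among the vertices of $R_t$.

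Now I would check the three conditions one at a time. Condition~($iii$), $d(x,z)<\delta'$, is immediate from $d(z,x)<1/t<\delta'$. For condition~($i$), I need $z$ correctly joined to $A$ and $B$: it is joined to all of $A$ by construction, and it is joined to no vertex of $B$ because $B\subseteq V(R_t)$ and $z$ is joined to no vertex of $R_t$ outside $A$; I should also note $z\notin A\cup B\cup\{x\}$, which holds since $z\notin V(R_t)$ (it is freshly chosen, and freshness across pairs guarantees it is a genuinely new vertex) while $A\cup B\cup\{x\}\subseteq V(R_t)$. For condition~($ii$), I need $d(u,z)<\delta$ for every $u\in A\cup B$. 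This is exactly the point where~\eqref{ca} does the work: since $u\in A\cup B\subseteq B_\delta(x)$, we have $u\in B_\delta(x)\cap V(R_t)=B_\delta(z)\cap V(R_t)$, hence $d(u,z)<\delta$.

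One subtlety I would be careful about is that $z$ might be joined, in later stages of the construction, to additional vertices — so I should emphasize that condition~($i$) only constrains adjacencies to vertices of $A\cup B$, all of which already lie in $R_t$, and that the construction never removes or alters edges when passing from $R_t$ to $R_{t+1}$ and beyond, so the adjacency pattern of $z$ to $V(R_t)$ is permanent. Another point to state cleanly is why such a $z$ exists at all inside the construction: this is already asserted in the construction (density of $V$ together with finiteness of $R_t$), so I can simply cite it. I expect the main obstacle to be purely expository rather than mathematical — namely, making the bookkeeping around~\eqref{ca} and the "freshness" of $z$ precise enough that conditions~($i$) and~($ii$) are seen to hold simultaneously with~($iii$); the genuine content is entirely contained in~\eqref{ca}, and once $t$ is chosen with $1/t<\delta'$ the argument is a short verification. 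Since the three conditions hold for arbitrary $\delta'<\delta$, arbitrary $x$, and arbitrary disjoint finite $A,B\subseteq B_\delta(x)$, the graph $GR(V,\delta,\sigma)$ is $\delta$-g.e.c., as required. $\hfill\qed$
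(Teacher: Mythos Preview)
Your proof is correct and follows essentially the same approach as the paper's own proof: choose $t$ large enough that $A\cup B\cup\{x\}\subseteq V(R_t)$ and $1/t<\delta'$, then take $z=z_{A,x}$ from the construction of $R_{t+1}$. Your write-up is in fact more careful than the paper's, since you explicitly verify condition~($ii$) via~\eqref{ca} and address the permanence of adjacencies and the freshness of $z$, whereas the paper leaves these points implicit.
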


\begin{proof}
To show that $GR(V,\delta,\sigma)=(V,E)$ is $\delta$-g.e.c., choose
$0<\delta'<\delta$, a vertex $x\in V,$ and disjoint sets $A,B\subseteq
V\setminus\{x\}$ so that $A\cup B\subseteq B_\delta(x)$. Let $t>0$ be
chosen so that $A\cup B\cup \{x\}\subseteq V(R_t)$ and
$1/t<\delta'$. Let $z=z_{A,x}$ be the vertex in $R_{t+1}$ added to
$R_t$ to extend
$(A,x)$. Then $z$ is
correctly joined to $A$ and $B$ and $d(x,z)<\delta'$.
\end{proof}

It should be emphasized that we do not claim that the graphs
$GR(V,\delta,\sigma)$ are the unique isotypes of
$\delta$-g.e.c.~graphs with vertex set $V.$ The theme of when two
$\delta$-g.e.c.~graphs are isomorphic will be explored in the next two
sections.

Balls with radius $\delta$ in $\delta$-g.e.c.~graphs contain copies of
$R$, and hence, contain isomorphic copies of all countable graphs.

\begin{theorem}\label{subset}
Let $U\subseteq S$ be so that
$U\subseteq B_{\delta}(x)$ for
some $x\in U.$ Then a $\delta$-g.e.c.~graph with vertex set $U$ is
e.c., and so is isomorphic to $R$.
\end{theorem}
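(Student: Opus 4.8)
The plan is to verify directly that a $\delta$-g.e.c.\ graph $G=(U,E)$ with $U\subseteq B_{\delta}(x)$ for some $x\in U$ satisfies the ordinary e.c.\ property, and then to invoke the classical characterization of $R$ as the unique countable e.c.\ graph. First I would record two preliminary observations: since $G$ is $\delta$-g.e.c., its vertex set $U$ is dense in itself, hence infinite, so $U$ is countably infinite; and, by hypothesis, the ball $B_{\delta}(x)$ contains all of $U$, so in particular it contains every finite subset of $U$.

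To check the e.c.\ property, let $A$ and $B$ be arbitrary disjoint finite subsets of $U$, one of which may be empty. Since $A\cup B\subseteq U\subseteq B_{\delta}(x)$, the pair $(A,B)$ together with the fixed centre $x$ and any choice of $\delta'$ with $0<\delta'<\delta$ (say $\delta'=\delta/2$) satisfies the hypotheses of the $\delta$-g.e.c.\ property. That property then produces a vertex $z\notin A\cup B\cup\{x\}$ correctly joined to $A$ and $B$; in particular $z\notin A\cup B$, which is exactly what the e.c.\ property requires. (The additional geometric conclusions $d(u,z)<\delta$ and $d(x,z)<\delta'$ play no role here, and the empty-set cases are covered since $\emptyset\subseteq B_{\delta}(x)$ and $U$ is infinite.) Hence $G$ is e.c., and being a countably infinite e.c.\ graph it is isomorphic to $R$ by the theorem of Erd\H{o}s and R\'enyi quoted in Section~\ref{intro}.

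I do not anticipate a genuine obstacle in this argument; it is essentially an unwinding of the definitions. The one point meriting attention is the alignment of quantifiers in the $\delta$-g.e.c.\ property: the centre there ranges over all vertices independently of $A$ and $B$, so we may use the single fixed point $x$ of the hypothesis as the centre no matter which $A$ and $B$ we are handed — and it is precisely the hypothesis $U\subseteq B_{\delta}(x)$ that makes this admissible and forces the ball constraint $A\cup B\subseteq B_{\delta}(x)$ to hold automatically.
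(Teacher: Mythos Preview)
Your proof is correct and follows essentially the same route as the paper's: fix the centre $x$ from the hypothesis, observe that any finite $A\cup B\subseteq U$ automatically lies in $B_{\delta}(x)$, and read off a correctly joined $z$ from the $\delta$-g.e.c.\ property. Your version simply spells out a few details (countability of $U$, a specific choice of $\delta'$, the empty-set cases) that the paper leaves implicit.
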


\begin{proof}
Let $G$ be a graph with vertex set $U$, where $U$ is as stated. Assume
that $G$ is $\delta$-g.e.c.
Let $A$ and $B$ be any pair of disjoint, finite subsets of $U\subseteq
B_\delta(x)$, and
let $0<\delta'<\delta$. Then by
the $\delta$-g.e.c.~condition there exists a vertex
$z\in B_{\delta'}(x)\cap U$ so that $z$ is correctly joined to $A$ and $B$.
\end{proof}

The converse of Theorem~\ref{subset} is false, in general. For
example, consider the metric space $(\mathbb{R},d)$, where $d$ is the
Euclidean metric, and let $\delta =1$. Fix $U$ an infinite
clique in $R$, and let $U'=V(R)\setminus U$. Embed the vertices of $U$
in $\Rrr$ so that they form a set that is dense in $B_{1/2}(0)$. Embed
the vertices of $U'$ so that they form a set that is dense in
$B_{1}(0)\setminus B_{1/2}(0)$. Now choose $y\in U$ so that
$d(0,y)<1/4$, and let $A=\emptyset$, and $B=\{b\}$, where $b\in
U\setminus \{y\}$. Let $\delta'=1/4$. Note that $A\cap B\subseteq
B_{1}(y)$. The embedding of the vertices of $R$ is such that all
vertices in $B_{\delta'}(y)$ are in $U$, so they are all adjacent to
$b$. Thus, $B_{\delta'}(y)$ does not contain any vertex correctly
joined to $A$ and $B$, and thus, this embedding of $R$ is not
$\delta$-g.e.c.

We finish with the following theorem which shows that there exists a
close relationship between graph distance and metric distance in any
graph that is $\delta$-g.e.c. We denote the closure of set $V$ in $S$ by
$\overline{V}$. The set $W$ is \emph{convex} if for every pair of points
$x$ and $y$ in $W,$ there exists a point $z$ such
that $$d(x,z)+d(z,y)=d(x,z).$$

\begin{theorem}
\label{mot}
Let $G=(V,E)$ be geometric $\delta$-graph, and let $\overline{V}$ be convex.
Let $u,v\in V$ so that $d(u,v)>\delta$. Then the graph distance
between $u$ and $v$ in $G$ equals $\lfloor d(u,v)/\delta\rfloor +1$.
\end{theorem}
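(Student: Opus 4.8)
The plan is to establish matching lower and upper bounds on the graph distance $d_G(u,v)$. Throughout write $m=\lfloor d(u,v)/\delta\rfloor$, and note $m\ge 1$ since $d(u,v)>\delta$.

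\textbf{Lower bound.} Suppose $u=w_0,w_1,\dots,w_k=v$ is a path in $G$. Since $G$ has threshold $\delta$, each edge satisfies $d(w_i,w_{i+1})<\delta$, so by the triangle inequality $d(u,v)\le\sum_{i=0}^{k-1}d(w_i,w_{i+1})<k\delta$. Hence $k>d(u,v)/\delta\ge m$, and since $k$ is an integer, $k\ge m+1$. Thus $d_G(u,v)\ge m+1$; in particular $u$ and $v$ are non-adjacent.

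\textbf{Upper bound.} Here I would exhibit a path of length exactly $m+1$. Using that $\overline V$ is convex, first choose points $u=y_0,y_1,\dots,y_{m+1}=v$ lying on a geodesic from $u$ to $v$ in $\overline V$, with $d(y_i,y_j)=|i-j|\,c$ where $c:=d(u,v)/(m+1)$; since $d(u,v)<(m+1)\delta$ we get $c<\delta$. Now fix $\eta>0$ small enough that $\tfrac{3}{2}\eta+c<\delta$ and $2\eta<c$, so that the balls $B_\eta(y_0),\dots,B_\eta(y_{m+1})$ are pairwise disjoint. Set $w_0=u$ and $w_{m+1}=v$, and for $i=1,\dots,m$ in turn do the following: by density of $V$ in $\overline V$ pick $x_i\in V\cap B_{\eta/2}(y_i)$; then $d(w_{i-1},x_i)\le d(w_{i-1},y_{i-1})+d(y_{i-1},y_i)+d(y_i,x_i)<\eta+c+\tfrac{\eta}{2}<\delta$ (using $w_{i-1}\in B_\eta(y_{i-1})$, which holds trivially for $i=1$), and likewise when $i=m$ one checks $d(v,x_m)<c+\tfrac{\eta}{2}<\delta$. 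Apply the $\delta$-g.e.c.\ property with centre $x_i$, with $A=\{w_{i-1}\}$ (and $A=\{w_{m-1},v\}$ when $i=m$), $B=\emptyset$, and $\delta'=\eta/2$, to obtain a vertex $w_i\in B_{\eta/2}(x_i)\subseteq B_\eta(y_i)$ adjacent to $w_{i-1}$ (and, when $i=m$, also to $v$). Since the balls $B_\eta(y_i)$ are disjoint, the $w_i$ are pairwise distinct, so $w_0w_1\cdots w_{m+1}$ is a path of length $m+1$ in $G$; hence $d_G(u,v)\le m+1$, and combined with the lower bound this finishes the proof.

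\textbf{Routine points and the main obstacle.} The suppressed verifications are: that each required distance is below $\delta$ (all of these hold with room to spare precisely because $c<\delta$ \emph{strictly}, which is exactly why the formula carries the extra $+1$ even when $d(u,v)$ is a multiple of $\delta$), and that each invocation of the $\delta$-g.e.c.\ property is legitimate (the sets $A,B$ are disjoint, finite, lie in $B_\delta(x_i)$, and $x_i\notin A\cup B\cup\{w_i\}$, all of which follow from disjointness of the $B_\eta(y_i)$). The step requiring genuine care is the very first one: producing the geodesic subdivision $y_0,\dots,y_{m+1}$ at the prescribed ratios is exactly where the convexity of $\overline V$ is used, and I expect this to be the main obstacle. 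For $\mathbb R^n$ with any $L_p$-metric it is immediate from $y_i=(1-\tfrac{i}{m+1})u+\tfrac{i}{m+1}v$, but for a general convex $\overline V$ one must argue that the convexity hypothesis yields points at arbitrary intermediate ratios along a geodesic — e.g.\ by iterating the defining property of convexity and passing to a limit, using that $\overline V$ is closed.
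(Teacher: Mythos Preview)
Your proof is correct and follows essentially the same two-step strategy as the paper: the lower bound via the triangle inequality along a shortest path, and the upper bound by laying down a chain of ``waypoints'' using convexity of $\overline V$, approximating them by vertices of $V$ via density, and then successively invoking the $\delta$-g.e.c.\ property (with the final step requiring adjacency to both the previous vertex and $v$).

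The one place where the paper differs slightly is precisely the obstacle you flagged. You ask for geodesic points $y_i$ at \emph{exact} prescribed ratios $i/(m+1)$, which from the bare Menger-convexity definition used in the paper (existence of \emph{some} between-point) would indeed require iterating and passing to a limit. The paper sidesteps this: it only iterates convexity finitely many times to obtain a chain of points in $\overline V$ with all successive gaps below $\epsilon/4$, and then \emph{selects} $k-1$ of these points with consecutive spacings below $\delta-3\epsilon/4$. This needs no limiting argument and no completeness hypothesis, so it is slightly more robust than your version; but the overall architecture is the same.
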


Theorem~\ref{mot} directly leads to the following corollary, which
supplies motivation for proofs of the isomorphism results in the next
section, and will be used to prove the non-isomorphism results of the final
section.

\begin{corollary}\label{useful}
  If $\overline{V}$ and $\overline{W}$ are convex, and there is a
  $\delta$-g.e.c.\ graph with vertices $V$ and a
  $\gamma$-g.e.c.\ graph with vertex set $W$ which are isomorphic via $f$, then for
  every pair of vertices $u,v\in V$, $$
\lfloor d(u,v)/\delta\rfloor=\lfloor d(f(u),f(v))/\gamma\rfloor.
$$
\end{corollary}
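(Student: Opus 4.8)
The plan is to combine two facts: a graph isomorphism preserves graph distance, and Theorem~\ref{mot} converts metric distance into graph distance whenever the former exceeds the threshold. I fix notation: let $d_G$ and $d_H$ denote graph distance in the two graphs (we invoke Theorem~\ref{mot}, so we use, as there, that these graphs also have thresholds $\delta$ and $\gamma$ respectively, as do all graphs $\mathrm{LARG}(V,\delta,p)$ with $V$ dense in itself), and for $u,v\in V$ put $m=d_G(u,v)=d_H(f(u),f(v))$. The task is then to recover $\lfloor d(u,v)/\delta\rfloor$ from $m$ on the $V$-side, recover $\lfloor d(f(u),f(v))/\gamma\rfloor$ from $m$ on the $W$-side, and check that the two agree.

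First I would prove a dichotomy pinning down $m$ in the large range: two distinct vertices $x,y$ of a geometric $\delta$-graph with convex closure have a common neighbour if and only if $d(x,y)<2\delta$. The forward direction is the triangle inequality together with the threshold hypothesis; the reverse uses convexity to pick a point $p$ on a geodesic from $x$ to $y$ with $d(x,p),d(p,y)<\delta$, density of the vertex set to pick a vertex near $p$ as centre, and the $\delta$-g.e.c.\ property with that centre and $A=\{x,y\}$ to produce the common neighbour. Consequently $m\le 2$ exactly when $d(u,v)<2\delta$, i.e.\ exactly when $\lfloor d(u,v)/\delta\rfloor\le 1$, and similarly on the $W$-side. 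Hence if $m\ge 3$ then $d(u,v)\ge 2\delta$ and $d(f(u),f(v))\ge 2\gamma$, both strictly above their thresholds, so Theorem~\ref{mot} applies on both sides and gives $\lfloor d(u,v)/\delta\rfloor=m-1=\lfloor d(f(u),f(v))/\gamma\rfloor$. If $m=1$, then $uv$ and $f(u)f(v)$ are edges, so both metric distances lie below their thresholds and both floors equal $0$.

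The case $m=2$ is the heart of the matter and I expect it to be the main obstacle: both floors now lie in $\{0,1\}$, and graph distance alone cannot decide which, because a non-edge at metric distance just below $\delta$ and one at distance strictly between $\delta$ and $2\delta$ both realise $m=2$. To separate these I would use an additional isomorphism invariant, for instance the property that $u$ has a neighbour $w$ with $d_G(v,w)\ge 3$. By the dichotomy this means $d(v,w)\ge 2\delta$, so the property forces $d(u,v)>\delta$ via the triangle inequality; conversely, when $d(u,v)>\delta$ one moves slightly along the geodesic past $u$ to a point still within $\delta$ of $u$ but more than $2\delta$ from $v$, and then invokes the $\delta$-g.e.c.\ property to upgrade this to a genuine neighbour $w$ of $u$ with $d(v,w)>2\delta$ --- the one step where the vertex set being spread out, as in $\Rrr^n$, is really used. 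Since this property and its $W$-side analogue are both preserved by $f$, the strict inequality $d(u,v)>\delta$ transfers to $d(f(u),f(v))>\gamma$ and back; this settles $m=2$ for all pairs except those at metric distance exactly $\delta$ (respectively exactly $\gamma$). Matching those boundary pairs is the delicate remaining point, which I would handle by a direct argument using convexity and density to locate suitable common neighbours --- it is exactly the rigidity captured by the notion of step-isometry introduced in the next section.
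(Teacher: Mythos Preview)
Your analysis is more careful than the paper's own treatment, which merely asserts that the corollary follows ``directly'' from Theorem~\ref{mot} and gives no argument at all. You are right that graph distance $m=2$ is where the content lies, and your invariant---existence of a neighbour $w$ of $u$ at graph distance $\ge 3$ from $v$---correctly separates $d(u,v)>\delta$ from $d(u,v)\le\delta$ in spaces such as $\mathbb{R}^n$ where geodesics extend beyond their endpoints. That is already more than the paper supplies.

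There is, however, a genuine gap that you flag but do not close: the boundary case $d(u,v)=\delta$. Your invariant cannot distinguish $d(u,v)=\delta$ (floor equal to $1$) from $d(u,v)<\delta$ (floor equal to $0$), since in both cases every neighbour $w$ of $u$ satisfies $d(v,w)\le d(v,u)+d(u,w)<2\delta$, so no such $w$ lies at graph distance $\ge 3$ from $v$. The promised ``direct argument using convexity and density'' is not given, and appealing to ``the rigidity captured by the notion of step-isometry'' is circular: the corollary is precisely the statement that $f$ is a step-isometry. It is in fact not evident that any graph-theoretic invariant separates these two subcases; the cleanest fix is to assume the vertex set is $\delta$-free (no pair at distance exactly $\delta$), a hypothesis the paper itself introduces later and under which the boundary case is vacuous. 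A secondary point: your geodesic-extension step already uses more than the stated metric-convexity hypothesis, though it is valid in the normed spaces $\mathbb{R}^n$ where the corollary is actually applied.
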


We supply a generalization of isometry, motivated by Theorem~\ref{mot}
and Corollary~\ref{useful}. Given metric spaces $(S,d_S)$ and $(T,d_T)$, sets $V\subseteq S$ and
$W\subseteq T$, and positive real numbers
$\delta$ and $\gamma$, a \emph{step-isometry at level
  $(\delta,\gamma)$} from $V$ to $W$ is a surjective map
$f:V\rightarrow W$ with the property that for every pair of vertices
$u,v\in V,$
\[
\lfloor d_S(u,v)/\delta\rfloor=\lfloor d_W(f(u),f(v))/\gamma\rfloor.
\]
Every isometry is a step-isometry, but the converse is false, in
general. For example, consider $\Rrr$ with the Euclidean metric, let
$\delta = \gamma =1$, and
let $V=[0,1)$ and $W=[0,0.5)$. Then $f: V\rightarrow W$ given by $f(x)=x/2$ is a
step-isometry, but is not an isometry.

\begin{proof}[Proof of Theorem~\ref{mot}]
Let $u,v\in V$. Let $k=\lfloor d(u,v)/\delta\rfloor +1$. By assumption,
$k\geq 2$. Note that the choice of $k$ supplies that $$(k-1)\delta \leq d(u,v) <
k\delta.$$ Let $\ell$ be the graph distance of $u$ and $v,$ and note that $\ell >1$ since $G$ is a $\delta$-threshold graph.

To show that $\ell\geq k$, let $v_0v_1\cdots v_\ell$, where
$v_0=u$, $v_\ell=v$, be a shortest path in $G$ from $u$ to $v$.  Since
$G$ has threshold $\delta$, $d(v_{i-1},v_{i})<\delta$ for
$i=1,\dots,\ell $. Therefore,
\begin{eqnarray*}
(k-1)\delta&\leq &d(u,v) \\
&\leq &\sum_{i=1}^{\ell} d(v_{i-1},v_{i})\\
 &< &\ell\delta,
\end{eqnarray*}
and so $\ell \geq k$.

Next, we show how to construct a path of length $k$ from $u$ to $v$ in
$G$, which will prove that $\ell\leq k$. Let
$\epsilon=(k\delta-d(u,v))/k$, so $d(u,v)=k(\delta-\epsilon)$.

The set $\overline{V}$ is convex, so for every pair of vertices $x,y$,
there exists a point $z\in \overline{V}$ so that $d(x,z)+d(z,y)=d(x,y)$. Using
this property, we can obtain a sequence of points in $\overline{V}$
between $u$ and $v$ whose successive distances add up to $d(u,v)$, and
which are at most $\epsilon/4$ apart. We  can then choose vertices
$x_1,\dots ,x_{k-1}$ from this sequence so that
$d(x_i,x_{i+1})<\delta-3\epsilon/4$ for $i=0,\dots, k-1$, where
$x_0=u$ and $x_k=v$. For $1\leq i<k$, we may then find $w_i\in V$ so
that $d(w_i,x_i)<\epsilon/8$. Letting $w_0=u$ and $w_k=v$, we have
that for $i=0,\dots, k-1$,
\begin{eqnarray}
d(w_i,w_{i+1})&\leq &d(w_i,x_i)+d(x_i,x_{i+1})+d(x_{i+1},w_{i+1}) \nonumber \\
&<&\delta-3\epsilon/4+2\epsilon/8 \nonumber \\
&<&\delta-\epsilon/2. \label{cha}
\end{eqnarray}

Let $v_0=w_0=u$. Now we successively apply the $\delta$-e.c.\ property to choose $v_i\in V$ so that
\begin{itemize}
\item[($i$)] $d(v_i,w_i)<\epsilon/2$, and
\item[($ii$)] $v_{i}$ is adjacent to $v_{i-1}$ in $G$.
\end{itemize}
It then follows that $v_0v_1\cdots v_k$ is the desired path of length $k.$
More precisely, fix $i$, $1\leq i<k-1$, and assume $v_{i-1}$ exists so that item $(i)$ holds. Note that ($i$) and (\ref{cha}) implies that
$$d(w_{i},v_{i-1})\leq d(w_{i-1},w_{i})+d(w_{i-1},v_{i-1})<\delta.$$
Therefore, $v_{i-1}\in B_{\delta}(w_i)$, and so we can find a vertex
$v_i$ in $B_{\epsilon/2}(w_i)$ which is adjacent to $v_{i-1}$.
To choose the vertex $v_{k-1}$, let $v_k=w_k=v$. By the same
argument as before, $d(w_{k-1},v_{k-2})<\delta$. Since $w_k=v_k$,
$d(w_{k-1},v_k)< \delta$. So $\{ v_{k-2},v_k\}\subseteq
B_\delta(w_{k-1})$. Therefore, there exist a vertex $v_{k-1}$ which is
adjacent to $v_{k-2}$ and $v_k=v$.
\end{proof}

\section{Isomorphism results}\label{isor}

In this section we consider metric spaces where the geometric
e.c.\ property gives a unique isotype of graph. We work in the space $\Rrr$ with the usual metric defined by $d(x,y)=|x-y|$. (We will
not mention this explicitly unless there is room for confusion.) The first result of the
section---which serves as the template for more general
results---is the following.
\begin{theorem}
\label{thm:1D}
Let $V$ and $W$ be two countable dense subsets of $\Rrr$, and let
$\delta,\gamma>0$.
If $G$ is a geometric $\delta$-graph with vertex set $V$ and $H$
is a geometric $\gamma$-graph with vertex set $W,$ then $G\cong H.$
\end{theorem}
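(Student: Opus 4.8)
The plan is to build an isomorphism between $G$ and $H$ by a back-and-forth argument, where at each stage we extend a finite partial map that is simultaneously a graph isomorphism on its domain and a \emph{step-isometry} at level $(\delta,\gamma)$. The key reason a back-and-forth will close up is the $\delta$-g.e.c.\ property (more precisely its level-$\gamma$ analogue for $H$): given a finite partial step-isometry-plus-graph-isomorphism $f_0\colon V_0\to W_0$ and a new vertex $x\in V\setminus V_0$ that we wish to add to the domain, we must produce a vertex $y\in W\setminus W_0$ realizing the correct graph-adjacencies to $f_0(V_0)$ \emph{and} the correct step-distances $\lfloor d(x,v)/\delta\rfloor = \lfloor d(y,f_0(v))/\gamma\rfloor$ for all $v\in V_0$. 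The adjacency requirements are exactly what the g.e.c.\ property delivers; the distance requirements are where the one-dimensional geometry of $\Rrr$ does the work.

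First I would set up the induction: enumerate $V=\{v_1,v_2,\dots\}$ and $W=\{w_1,w_2,\dots\}$, and at odd steps force $v_i$ into the domain, at even steps force $w_i$ into the range, in the usual way. The substantive step is the extension claim. Fix the finite partial map $f_0$ and a target vertex $x$; I would first use the linear order on $\Rrr$ to locate $x$ relative to $V_0$: $x$ sits in some gap between consecutive points of $V_0$ (or beyond all of them), and I want to pin down, for each $v\in V_0$, the integer $k_v=\lfloor d(x,v)/\delta\rfloor$, i.e.\ which ``$\delta$-band'' around $v$ contains $x$. The set of constraints $\{y : \lfloor d(y,f_0(v))/\gamma\rfloor = k_v \text{ for all } v\in V_0\}$ is a finite intersection of half-open annuli (here, unions of two intervals each) in $\Rrr$; the heart of the matter is to show this set is a \emph{non-degenerate interval}, in particular has non-empty interior, so that density of $W$ gives infinitely many candidate points there, and then among those we can invoke the $\gamma$-g.e.c.\ property of $H$ to get one with the right adjacencies. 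To see the constraint set is a genuine interval I would argue by transporting the picture: since $f_0$ is already a step-isometry, the bands around the $f_0(v)$ in the $W$-side are ``combinatorially arranged'' the same way as the bands around the $v$ in the $V$-side, so the region where $x$ lives corresponds to a non-empty region on the $W$-side. A clean way to make this rigorous is to note that adding $x$ only refines the partition of $\Rrr$ into $\delta$-bands determined by $V_0$ into a strictly finer consistent partition, and the corresponding refinement on the $W$-side is realizable because $\Rrr$ (unlike, say, a discrete space) has no obstruction to inserting a point at a prescribed combinatorial location between existing ones — monotonicity of $t\mapsto\lfloor t/\gamma\rfloor$ and the intermediate value property are all that is needed.

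Having produced an open interval $I\subseteq\Rrr$ of valid positions, I would then apply the $\gamma$-g.e.c.\ property to $H$: take $x'$ to be any point of $\overline I$ that is in $W$ and lies in a common $\gamma$-ball with a suitable reference vertex, set $A$ and $B$ to be the sets of $f_0$-images that must or must not be adjacent to the new point (these are finite and, by construction of $I$, all within $\gamma$ of the relevant centre), and extract $z=y\in W\cap I$ correctly joined to $A$ and $B$ with $d(\cdot,x')$ arbitrarily small — small enough to keep $y$ inside $I$. Then $f_1 = f_0\cup\{(x,y)\}$ is again a partial graph-isomorphism and step-isometry, completing the extension. Running the back-and-forth to its limit yields a bijection $f\colon V\to W$ that is a graph isomorphism, hence $G\cong H$.

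The main obstacle I anticipate is precisely the claim that the candidate region $I$ on the $W$-side is non-empty (and open). In one dimension this reduces to a careful but elementary bookkeeping of floor functions of distances along a line — one has to check that the band-indices $k_v$ arising from an \emph{actual} point $x$ between points of $V_0$ satisfy the obvious triangle-type inequalities among themselves, and that \emph{any} assignment satisfying those inequalities is realized by a non-trivial interval on the other side of the correspondence. This is the technical core of Theorem~\ref{thm:1D}, and it is exactly the place where the $L_\infty$ story in higher dimensions (Theorem~\ref{thm:nD}) becomes genuinely harder and where the Euclidean metric fails (Theorem~\ref{non2}): the ``floor of distance'' bands in $\Rrr$ with the usual metric are unions of intervals with a rigid nesting structure, and it is this rigidity — together with density — that lets the induction go through. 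I would therefore isolate this interval-realizability fact as a separate lemma before assembling the back-and-forth.
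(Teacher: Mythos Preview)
Your proposal is correct and follows essentially the same back-and-forth strategy as the paper, maintaining a partial isomorphism that is simultaneously a step-isometry. The one substantive organizational difference worth noting: where you track the full list of band-indices $k_v=\lfloor d(x,v)/\delta\rfloor$ and intersect the corresponding annuli, the paper instead introduces a \emph{quotient/representative} decomposition $v=v_0+q(v)\delta+r(v)$ and proves (Lemma~\ref{lemma:isometry_conditions}) that a map is a step-isometry at level $(\delta,\gamma)$ if and only if it preserves the quotients $q(\cdot)$ exactly and preserves the \emph{linear order} of the representatives $r(\cdot)$. With this reformulation, your ``interval-realizability'' obstacle dissolves into a one-line computation: the target interval is simply $I=(k\gamma+a,\,k\gamma+b)$ where $k=q(v)$, $a=\max\{r(f(u)):r(u)\le r(v)\}$ and $b=\min\{r(f(u)):r(u)>r(v)\}$, and $a<b$ is immediate from the induction hypothesis. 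The same lemma also makes the verification that $f(B_\delta(v)\cap V_i)\subseteq B_\gamma(x)$ for any $x\in I$ a short case analysis on $q(u)\in\{k-1,k,k+1\}$. Your approach would of course arrive at the same interval, but the quotient/representative bookkeeping is what turns the hand-wavy ``combinatorially arranged the same way'' into two lines, and it is also what generalizes cleanly to $\Rrr^n$ with the $L_\infty$ metric (Lemma~\ref{lemma:isometry_conditions_Rn}).
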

The proof of the theorem (and others analogous to it in this section) build up an isomorphism as a
step-isometry. In the proofs we use the following alternative characterization of
step-isometries. Fix $\delta>0$ and $v_0\in \Rrr$. Each
 $v\in \Rrr$ may be
uniquely represented as
$$v=v_0+q(v)\delta+r(v),$$ where
$q(v)=\lfloor
(v-v_0)/\delta\rfloor $ and  $0\leq
r(v)<\delta$. In this representation, we will refer to $\delta$ as the
{\sl offset}, and to $v_0$ as the {\sl anchor}. We will omit to state the anchor and offset explicitly
wherever it is clear from the context. The term $r(v)$ is
called the {\sl representative} of $v$ and $q(v)$ the
{\sl quotient}.

\begin{lemma}
\label{lemma:isometry_conditions}
Let $V$ and $W$ be subsets of $\Rrr$, and let $\delta$ and $\gamma$ be
two non-negative real numbers. A surjective function $f:V\rightarrow
W$ is a step-isometry at level $(\delta,\gamma)$ if and only if the following two
conditions hold.
\begin{enumerate}
\item For every $u,v\in V$, $r(u)\leq r(v)$ if and only if $r(f(u))\leq r(f(v)).$
\item For every $u\in V$, $q(u)=q(f(u))$,
\end{enumerate}
where the representation of elements of $V$ has offset $\delta$ and
that of $W$ has offset $\gamma$, and the anchor of the representation
of $W$ is the image of the anchor of the representation of $V$ under $f$.
\end{lemma}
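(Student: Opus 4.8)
The plan is to unwind the definition of step-isometry in terms of floors and match it against conditions (1) and (2). Fix $v_0 \in V$ as the anchor, and write each $v \in V$ as $v = v_0 + q(v)\delta + r(v)$ with $q(v) \in \Zzz$ and $0 \le r(v) < \delta$; similarly write each $w \in W$ with anchor $f(v_0)$, offset $\gamma$, quotient $q(w)$ and representative $r(w)$. The key elementary observation I would establish first is a formula for $\lfloor d(u,v)/\delta \rfloor$ in terms of the quotients and representatives: if $u = v_0 + q(u)\delta + r(u)$ and $v = v_0 + q(v)\delta + r(v)$, then $v - u = (q(v) - q(u))\delta + (r(v) - r(u))$ where $r(v) - r(u) \in (-\delta, \delta)$. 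Writing $n = q(v) - q(u)$ and $\rho = r(v) - r(u)$, a short case analysis on the sign of $n$ and of $\rho$ gives
\[
\left\lfloor \frac{d(u,v)}{\delta} \right\rfloor = \left\lfloor \frac{|n\delta + \rho|}{\delta} \right\rfloor =
\begin{cases}
n & \text{if } n \ge 1,\ \rho \ge 0,\\
n-1 & \text{if } n \ge 1,\ \rho < 0,\\
|n| & \text{if } n \le -1,\ \rho \le 0,\\
|n|-1 & \text{if } n \le -1,\ \rho > 0,\\
0 & \text{if } n = 0.
\end{cases}
\]
The point is that $\lfloor d(u,v)/\delta \rfloor$ depends only on the difference of quotients $q(v)-q(u)$ together with the sign of $r(v)-r(u)$ (i.e.\ whether $r(u) < r(v)$, $r(u) = r(v)$, or $r(u) > r(v)$). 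An identical formula holds on the $W$ side with $\gamma$, quotients $q(f(u)), q(f(v))$ and representatives $r(f(u)), r(f(v))$.

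For the ``if'' direction, assume (1) and (2) hold. Given $u, v \in V$, condition (2) gives $q(f(u)) = q(u)$ and $q(f(v)) = q(v)$, so $q(f(v)) - q(f(u)) = q(v) - q(u)$. Condition (1), applied to the pair $(u,v)$ and to the pair $(v,u)$, gives that $r(u) < r(v) \iff r(f(u)) < r(f(v))$ and $r(u) = r(v) \iff r(f(u)) = r(f(v))$; hence the sign of $r(v) - r(u)$ matches that of $r(f(v)) - r(f(u))$. Plugging these two facts into the displayed formula on each side yields $\lfloor d(u,v)/\delta \rfloor = \lfloor d(f(u),f(v))/\gamma \rfloor$. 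Since $f$ is surjective by hypothesis, $f$ is a step-isometry at level $(\delta,\gamma)$.

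For the ``only if'' direction, assume $f$ is a step-isometry. Condition (2) is the cleaner of the two: apply the step-isometry identity to the pair $(v_0, u)$. Since $q(v_0) = 0$ and $r(v_0) = 0$ by choice of anchor, the formula reduces on both sides to showing $\lfloor |q(u)\delta + r(u)|/\delta \rfloor$ equals $q(u)$ up to the sign-of-$r(u)$ correction above, and likewise $\lfloor d(f(v_0), f(u))/\gamma \rfloor$ gives $q(f(u))$ up to the sign-of-$r(f(u))$ correction. Here the main technical nuisance — and the step I expect to be the real obstacle — is handling the boundary/sign cases simultaneously so as to pin down $q(f(u)) = q(u)$ exactly rather than merely up to $\pm 1$: one must argue that the representative of $u$ and the representative of $f(u)$ land on the same side of (or exactly at) $0$. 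This is done by also testing pairs $(u, v_0)$ and comparing, or by using density of $V$ to find a second point $u'$ with $r(u')$ strictly between the relevant values and deriving a contradiction from a mismatch of floors. Once (2) is in hand, condition (1) follows: for $u, v \in V$ with $q(u) = q(v)$, the step-isometry identity for the pair $(u,v)$ — now knowing the quotient differences agree on both sides via (2) — forces the sign of $r(v) - r(u)$ to equal the sign of $r(f(v)) - r(f(u))$, which is exactly (1) for such pairs; for $u, v$ with different quotients the inequality $r(u) \le r(v)$ is not constrained in an order-preserving way and one reduces to the equal-quotient case by translating along the lattice $v_0 + \delta\Zzz$, using that $q(f(u))$ and $q(f(v))$ differ by the same integer so that shifting representatives back to a common quotient is consistent on both sides. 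Assembling these gives the equivalence.
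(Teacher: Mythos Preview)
Your ``if'' direction is correct and is essentially the paper's argument: the paper writes $\lfloor d(u,v)/\delta\rfloor=q(v)-q(u)-s$ after reducing without loss of generality to $v>u$, which is your case formula compressed.

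For the ``only if'' direction you have the difficulty located correctly for~(2), but your proposed fixes fail: testing $(u,v_0)$ gives the same information as $(v_0,u)$ since $d$ is symmetric, and density of $V$ is not a hypothesis of the lemma. For~(1) your plan is inverted. When $q(u)\neq q(v)$, your formula together with~(2) pins down the sign of $r(v)-r(u)$ directly (this is essentially the paper's computation); it is when $q(u)=q(v)$ that both floors equal $0$ and the step-isometry identity is vacuous, so your claimed deduction in the equal-quotient case does not go through, and ``translating along $v_0+\delta\Zzz$'' is unavailable since $V$ need not contain such translates. In fact the ``only if'' direction is false as stated: with $\delta=\gamma=1$, $V=W=\{0,\tfrac{3}{10},\tfrac{6}{10}\}$, and $f$ fixing $0$ while swapping the other two points, all pairwise distances lie below $1$, so every floor vanishes and $f$ is a bijective step-isometry, yet with anchor $0$ one has $r(\tfrac{3}{10})<r(\tfrac{6}{10})$ but $r(f(\tfrac{3}{10}))>r(f(\tfrac{6}{10}))$, violating~(1). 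The paper's own proof glosses over the same points (it asserts that~(2) ``follows immediately'' and, in proving~(1), tacitly writes $\lfloor d(u',v')/\gamma\rfloor=q(v')-q(u')-s'$, which presupposes $v'\ge u'$); only the ``if'' direction is used in the paper's subsequent arguments.
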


\begin{proof}
Assume first that items (1) and (2) hold, and fix $u,v\in V$.  Let
$u'=f(u)$ and $v'=f(v)$. Assume without loss of generality that $v>u$;
by hypothesis, this implies that $v'>u'$.  Then
$d(u,v)=v-u=(v-v_0)-(u-v_0)$, and so
\[
d(u,v)=(v-v_0)-(u-u_0)=(q(v)-q(u))\delta+(r(v)-r(u)).
\]
Hence,
\[
\lfloor d(u,v)/\delta\rfloor= q(v)-q(u)-s,
\]
 where $s=0$ if $r(v)\geq
r(u)$, and $s=1$ otherwise. Similarly,
$$
\lfloor d(u',v')/\gamma\rfloor=q(v')-q(u')-s',
$$
where $s'=0$ if $r(v')\geq
r(u')$, and $s'=1$ otherwise. By hypothesis, we have that $s=s'$ and
$q(v)-q(u)=q(v')-q(u')$.
It follows that $f$ is a step-isometry at
level $(\delta,\gamma)$.

Now assume that $f$ is a step-isometry at level
$(\delta,\gamma)$. Let $v_0\in V$ and $w_0\in W$ be so that
$f(v_0)=w_0$ and consider the representations of elements of $V$ and $W$ with
offsets $\delta$ and $\gamma$, and anchors $v_0$ and $w_0$,
respectively.

Condition (2) follows immediately from the definition of
step-isometry. For the proof of (1), fix any $u,v\in V$, and let
$u'=f(u)$ and $v'=f(v)$. Then
$$\lfloor d(u,v)/\delta\rfloor=
\lfloor
q(v)-q(u)+(r(v)-r(u))/\delta \rfloor = q(v)-q(u)-s,$$ where $s=-\lfloor
(r(v)-r(u))/\delta \rfloor$ ; similarly, $\lfloor d(u',v')/\gamma\rfloor =
q(v')-q(u')-s'$, where  $s'=-\lfloor
(r(v')-r(u'))/\delta \rfloor$. Since $f$ is a step-isometry, $s=s'$. If
$s=s'=0$, then both
$r(v)\geq r(u)$ and $r(v')\geq r(u')$; if $s=s'=1$, then
$r(v)< r(u)$ and $r(v')< r(u')$.
Thus item (1)
holds.
\end{proof}

\begin{proof}[Proof of Theorem~\ref{thm:1D}] The proof follows using a variant
of the back-and-forth method (used to show that $R$ is the unique isotype of e.c.\ graph).
Let $V=\{v_i:i\geq 0\}$ and $W=\{w_i:i\geq 0\}$. For $i\ge 0$, we
inductively construct a sequence of pairs of sets $(V_i,W_i)$ and
isomorphisms $f_i:G[V_i]\rightarrow H[W_i]$, so that for all $i\geq
1$, $v_i\in V_i$, $w_i\in W_i$, $V_{i}\subseteq V_{i+1}$ and
$W_i\subseteq W_{i+1}$, and $f_{i+1}$ extends $f_i$. It follows
that $$\bigcup_{i\in \mathbb{N}} f_i: G \rightarrow H$$ is an
isomorphism.  As an additional induction hypothesis we require that
$f_i$ is a step-isometry from $V_i$ to $W_i$ at level
$(\delta,\gamma)$.
Specifically, we maintain conditions in
items (1) and (2) from Lemma~\ref{lemma:isometry_conditions}, where
the representation of elements of $V$ has offset $\delta$ and anchor $v_0$, and
the representation of elements of $W$ has offset $\gamma$ and anchor $w_0$.

Let $V_0 = \{ v_0 \}$, $W_0 = \{ w_0\},$ and define
$f_0$ by $f_0(v_0) =w_0.$ Then $q(v_0)=q(w_0)=0$ and
$r(v_0)=r(v_{0}')=0$, so the base case of the induction
follows. For the induction step, fix $i\geq 0$. To construct $f_{i+1}$
from $f_{i}$ we first \emph{go forth} by finding an image of
$v_{i+1}$. In the following, $f$ refers to $f_{i}$ and $v=v_{i+1}$.

Define
\begin{eqnarray*}
a&=&\max\{r(f(u)):u\in V_i\mbox{ and }r(u)\leq r(v)\},\\
b&=&\min\{r(f(u)):u\in V_i\mbox{ and } r(u)> r(v)\}.\\
\end{eqnarray*}
We claim that $a<b$. Namely, let $u_a$ and $u_b$ be the elements in
$V_i$ for which the maximum and minimum that define $a$
and $b$ are attained, respectively. Thus, $r(f(u_a))=a$ and $r(f(u_b))=b$. By
definition, $r(u_a)\leq r(v)<r(u_b)$. By the induction
hypothesis (specifically, item (1) from Lemma
\ref{lemma:isometry_conditions}), this implies that
$a=r(f(u_a))<r(f(u_b))=b$.

In order to maintain the induction hypothesis, $r(f(v))$ must lie in
$[a,b)$, and $q(f(v))$ must equal to $q(v)$. Let $k=q(v)$, and
  consider the interval
$$
I=(k\gamma +a,k\gamma +b).
$$
Any vertex in $I$
  will qualify as a candidate for $f(v),$ so that $f_{i+1}$ is a
  step-isometry at level $(\delta, \gamma).$ We must then find a
  vertex in $I$ that will also guarantee that $f$ is an isomorphism, by
  making sure it has the correct neighbours. For this, we apply the
  $\gamma$-g.e.c.~condition of $H$.

In order to apply the
  $\gamma$-g.e.c.~condition, we need to ensure that the images of all
  neighbours of $v$ in $V_i$ lie in a $\gamma$-ball. Since $G$ has
  threshold $\delta$, we consider all vertices of $V_i$ that lie in a
  $\delta$-ball around $v$.
Let $Y=B_\delta(v)\cap V_i$, and fix $x\in I\cap W$. Such a vertex $x$
exists since $W$ is dense in $\Rrr$. By definition of $I$,
$q(x)=k$. We claim that
\begin{equation}\label{moo}
f(Y)\subseteq B_{\gamma}(x).
\end{equation}
To prove this, let $u\in Y.$ Since $q(v)=k$ and $d(u,v)<\delta$, it
follows that $|q(u)-k|\leq 1$. Hence, $q(u)$ is one of $k,$ $k-1,$ or
$k+1.$

If $q(u)=k,$ then $q(f(u))=k$ by induction hypothesis, so
$d(f(u),x)<\gamma$. If $q(u)=k-1$, then $r(u)>r(v)$, so $r(f(u))>b$ by
definition of $b$. Hence,
\begin{eqnarray*}
d(f(u),x) & = & x-f(u) \\
& < &k\gamma+b-(k-1)\gamma-r(f(u))\\
& < &\gamma.
\end{eqnarray*}
The final case is when $q(u)=k+1.$ Then $r(f(u))\leq a$, so we have that
\begin{eqnarray*}
d(f(u),x) & = & f(u)-x \\
&<&(k+1)\gamma+r(f(u))-k\gamma-a\\
&\leq & \gamma.
\end{eqnarray*} In all cases, $f(u)\in B_{\gamma}(x),$ and (\ref{moo}) follows.

Since $G$ has threshold $\delta$, $N(v)\cap V_i\subseteq Y$. Now let
$A=f(N(v)\cap V_i)$ and $B=(W_i\cap B_\delta(x))\setminus A$. Then
$A\cap B\subseteq B_{\gamma}(x)\cap W_i$. Let $\epsilon>0$ be chosen
such that $B_\epsilon(x)\subseteq I$.
We now use the $\gamma$-e.c.~property of $H$ to find a point $z\in
B_\epsilon(x)$ which is adjacent to all vertices in $A$ and no other
vertices of (the finite set) $W_i$. Thus, we can add $z$ to $W_i$ to
form $W_{i+1}$ and add $v$ to $V_i$ to form $V_{i+1}$, and set
$f_{i+1}(v)=z$. Observe that $f_{i+1}$ is an isomorphism.

To finish the induction step, if $w_{i+1}\not\in W_{i+1}$ then we may
\emph{go back}, by finding an image $z=f^{-1}_{i+1}(w_{i+1})$  in an
analogous fashion. We then add $z$ to $V_{i+1}$, and maintain that
$f_{i+1}$ is an isomorphism.
\end{proof}

The proof of the following corollary is now immediate.

\begin{corollary}
\label{cor}
For all countable dense subsets $V$ of $\Rrr$, $\delta >0$, and $p\in
(0,1),$  with probability $1$, there is a unique isotype of graph,
written $GR_1,$ in $\mathrm{LARG}(V,\delta,p)$.
\end{corollary}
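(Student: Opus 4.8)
The plan is to combine the two main results already established in the paper. By Theorem~\ref{random}, for any countable dense subset $V$ of $\Rrr$, any $\delta>0$, and any $p\in(0,1)$, with probability $1$ a graph $G\in\mathrm{LARG}(V,\delta,p)$ is $\delta$-g.e.c. Moreover, by the definition of the LARG model, every such $G$ has threshold $\delta$ (edges only appear between vertices at distance less than $\delta$). Since $V$ is dense in $\Rrr$, it is in particular dense in itself, so Theorem~\ref{random} applies. Hence with probability $1$, $G$ is a geometric $\delta$-graph with vertex set $V$.

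Next I would invoke Theorem~\ref{thm:1D} with $\gamma=\delta$ and $W=V$ (or with any two independent samples). Concretely: fix any two graphs $G_1, G_2$ drawn from $\mathrm{LARG}(V,\delta,p)$ that both happen to be geometric $\delta$-graphs on $V$. Since $V$ is a countable dense subset of $\Rrr$, Theorem~\ref{thm:1D} (applied with $V=W$ and $\delta=\gamma$) gives $G_1\cong G_2$. Because the event ``$G$ is a geometric $\delta$-graph'' has probability $1$, the event that a sampled graph lies in this single isomorphism class also has probability $1$. We christen this isotype $GR_1$. (One should note that $GR_1$ does not depend on the particular choice of dense $V$, again by Theorem~\ref{thm:1D}, which allows $V\neq W$; this justifies suppressing $V$ from the notation.)

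There is essentially no obstacle here: the corollary is a direct packaging of Theorem~\ref{random} and Theorem~\ref{thm:1D}, which is exactly why the excerpt remarks that ``the proof of the following corollary is now immediate.'' The only point requiring a sentence of care is the logical step from ``almost every sampled graph is a geometric $\delta$-graph on a countable dense set'' to ``almost every sampled graph belongs to one fixed isomorphism type'': this is just the observation that an almost-sure property (here, being $\delta$-g.e.c.\ with threshold $\delta$) together with the deterministic fact that all such graphs are mutually isomorphic pins down the isotype on a probability-$1$ event. I would write the proof in two or three sentences accordingly, citing Theorem~\ref{random} for the g.e.c.\ property and Theorem~\ref{thm:1D} for the uniqueness of the isotype.
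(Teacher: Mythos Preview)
Your proposal is correct and is exactly the intended immediate argument: Theorem~\ref{random} gives that a.s.\ the sampled graph is $\delta$-g.e.c.\ (hence a geometric $\delta$-graph, since the LARG model forces threshold $\delta$), and Theorem~\ref{thm:1D} then shows any two such graphs on countable dense subsets of $\Rrr$ are isomorphic, yielding a single isotype $GR_1$ independent of $V$, $\delta$, and $p$. This matches the paper's remark that the proof is immediate.
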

The isomorphism type of $GR_1$ does not depend on the
choices of $V$, $\delta$ or $p$; moreover, the same result holds for
any 1-dimensional normed vector space with the metric derived from the norm.
For this reason, we name
$GR_1$ the \emph{infinite random geometric graph of
  dimension 1}.  Note that $GR_1$ has infinite
diameter (unlike $R$, which has diameter $2$). Note that, for any countable set $V\subseteq \Rrr$, any
ordering $\sigma$ of $v$,
and any real $\delta>0$, the deterministic construction process
$R(V,\sigma,\delta)$ described in the previous section gives explicit representations of $GR_1$.

We may extend Theorem~\ref{thm:1D} to sets that are not necessarily
dense in all $\Rrr$. The only additional condition required is that
there exists a step-isometry between the two sets. For example, consider the rational intervals $V=[a,b)$ and $W=[a',b')$, where
    $\lfloor (b-a)/\delta\rfloor=\lfloor
    (b'-a')\gamma\rfloor$. Consider the
    bijective map $f:V\rightarrow W$ defined by
\[
f(x)=\left\{ \begin{array}{ll} a'+q(x)\gamma +r(b')r(x)/r(b) &
\mbox{    if } r(x)\leq r(b)\\
a'+(q(x)+1)\gamma +(\gamma - r(b'))(\delta - r(x))/(\delta-r(b)) &
 \mbox{    if } r(x)>r(b),\end{array}\right.
\]
where $q(x)$, $r(x)$ and $r(b)$ refer to the representation of elements $V$
with offset $\delta$ and anchor $a$, and $r(b')$ refers to the
representation of elements $W$ with offset $\gamma$ and anchor $a'$. In other
words, $f$ is a convex mapping of the intervals $[a+k\delta,a+k\delta
  +r(b))$, $k=0,1,\dots,q(b)$, to the intervals
  $[a'+k\gamma,a'+k\gamma +r(b'))$, respectively, and of the intervals
    $[a+r(b)+k\delta,a+(k+1)\delta)$ to the intervals
      $[a'+r(b')+k\gamma,a'+(k+1)\gamma)$.
It is straightforward to verify that $f$ is a step-isometry
    at level $(\delta,\gamma)$, so any geometric $\delta$-graph and geometric $\gamma$ graph with vertex sets $V$
    and $W$, respectively, are isomorphic.
Another setting we consider is where $V$ and $W$ are disjoint unions of rational intervals for
which there exists a step-isometry between the endpoints of the intervals
of $V$ to the endpoints of the intervals of $W$.

\begin{theorem}
\label{thm:1Disometry}
Let $V$ and $W$ be two countable subsets of $\Rrr$, and let
$\delta,\gamma>0$. Let $F$ be a bijective step-isometry from $V$ to
$W$ at level $(\delta,\gamma)$.
If $G$ is a geometric $\delta$-graph with vertex set $V$ and $H$
is a geometric $\gamma$-graph with vertex set $W,$ then $G\cong H.$
\end{theorem}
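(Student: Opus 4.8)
The plan is to adapt the back-and-forth argument from the proof of Theorem~\ref{thm:1D}, using the given bijective step-isometry $F$ as a ``template'' to control the construction. The key observation is that in the proof of Theorem~\ref{thm:1D} the density of $V$ and $W$ was used in exactly one place: to guarantee that the candidate interval $I$ (and the sub-ball $B_\epsilon(x)\subseteq I$ on which the $\gamma$-g.e.c.\ property is applied) actually contains a point of $W$. When $V$ and $W$ need not be dense, this may fail; the role of $F$ is to supply a canonical point of $W$ lying in the correct ``slot''. So the modified induction hypothesis will be that $f_i\colon G[V_i]\to H[W_i]$ is an isomorphism which is a step-isometry at level $(\delta,\gamma)$, \emph{and} which respects the relative order dictated by $F$ in the sense made precise below.

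First I would fix enumerations $V=\{v_i:i\ge 0\}$, $W=\{w_i:i\ge 0\}$, pick the anchors $v_0$ and $w_0=F(v_0)$ (adjusting $F$ by a translation so that $F(v_0)=w_0$ is harmless, or simply taking $v_0$ arbitrary and $w_0:=F(v_0)$), and set $f_0(v_0)=w_0$. For the forth step with $v=v_{i+1}$, I would first define $a,b$ exactly as in the proof of Theorem~\ref{thm:1D}, obtaining $a<b$ from item~(1) of Lemma~\ref{lemma:isometry_conditions}, and form the candidate interval $I=(k\gamma+a,\,k\gamma+b)$ with $k=q(v)$. The point is now that $F(v)$ need not lie in $I$, but $F$ being a step-isometry means $q(F(v))=q(v)=k$ and $F$ preserves the order of representatives, so $r(F(v))$ lies strictly between $\max\{r(F(u)):u\in V_i,\ r(u)\le r(v)\}$ and $\min\{r(F(u)):u\in V_i,\ r(u)>r(v)\}$; if in the induction we additionally require that $f_i$ and $F$ induce the same order relations on $V_i$ (which is automatic if we always pick images consistently with $F$), then these bounds are exactly the preimages under the ``$r(f(u))$'' data of $a$ and $b$. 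Hence $F(v)\in I$, so $I\cap W\ne\emptyset$ — we may simply take $x:=F(v)$. With this choice of $x$ the verification of \eqref{moo}, i.e.\ $f(Y)\subseteq B_\gamma(x)$ for $Y=B_\delta(v)\cap V_i$, goes through verbatim as in Theorem~\ref{thm:1D} (the three cases $q(u)=k,k\pm1$), and then the $\gamma$-g.e.c.\ property of $H$ applied at $x$ with $\delta'=\gamma$, sets $A=f(N(v)\cap V_i)$, $B=(W_i\cap B_\delta(x))\setminus A$, and $\epsilon$ small enough that $B_\epsilon(x)\subseteq I$, produces $z=f_{i+1}(v)\in I$ with the correct neighbours. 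The back step is symmetric, using $F^{-1}$ (which is again a bijective step-isometry, at level $(\gamma,\delta)$) to locate the slot for the preimage of $w_{i+1}$.

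The main obstacle — really the only subtlety beyond bookkeeping — is making sure the ``use $F(v)$ as the pivot $x$'' move is legitimate, i.e.\ that $F(v)$ genuinely falls in the open interval $I$ determined by the already-constructed partial isomorphism $f_i$, not by $F$ itself. This requires that $f_i$ and $F$ agree on the qualitative order data used to define $a$ and $b$: concretely, for $u\in V_i$ one needs $r(u)\le r(v)\iff r(F(u))\le r(F(v))$ and $q$-values matching, which holds because $F$ is a step-isometry, combined with the fact that $r(f_i(u))$ and $r(F(u))$ sit the same way relative to each other — the cleanest way to ensure this is to strengthen the induction hypothesis to ``for all $u,u'\in V_i$, $r(f_i(u))\le r(f_i(u'))\iff r(F(u))\le r(F(u'))$ and $q(f_i(u))=q(F(u))$'', and check it is preserved when $z=f_{i+1}(v)$ is inserted into the interval $I$ (it is, since $z\in I$ forces $r(z)\in[a,b)$, which is exactly the slot $r(F(v))$ occupies, and $q(z)=k=q(F(v))$). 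Once this invariant is in place the argument is essentially a transcription of the proof of Theorem~\ref{thm:1D} with every appeal to density of $W$ replaced by ``take $x=F(v)$'' (respectively $x=F^{-1}(w)$ on the back step), so I would present it in that compressed form, emphasizing only the one new verification and referring to Theorem~\ref{thm:1D} for the rest.
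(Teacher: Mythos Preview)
Your overall plan---strengthen the induction hypothesis so that the partial isomorphism $f_i$ stays compatible with $F$, then use $x:=F(v)$ as the anchor for the g.e.c.\ call---is exactly the paper's approach. But the specific invariant you propose is too weak to push through the key step ``$F(v)\in I$''.

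You require only that $f_i$ and $F$ induce the \emph{same linear order} on the representatives $\{r(u):u\in V_i\}$. From this you correctly deduce that $r(F(v))$ lies between $a':=\max\{r(F(u)):u\in V_i,\ r(u)\le r(v)\}$ and $b':=\min\{r(F(u)):u\in V_i,\ r(u)>r(v)\}$. But $a$ and $b$ are defined with $r(f(u))$, not $r(F(u))$; the order-preservation tells you the same element $u_a$ achieves both maxima, yet $a=r(f(u_a))$ and $a'=r(F(u_a))$ are in general different numbers. Concretely: after the base step one can have $V_1=\{v_0,u\}$ with $r(v_0)=r(f(v_0))=r(F(v_0))=0$, $r(u)=0.3$, $r(F(u))=0.2$, and $z=f(u)$ chosen (legitimately, in your scheme) with $r(z)=0.7$; your invariant holds. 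For the next $v$ with $r(v)=0.4$ you get $a=0.7$, while $r(F(v))$ need only exceed $0.2$---so $F(v)$ can sit well outside $I$, and there is no reason any point of $W$ lies in $I$ at all.

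The paper closes this gap by carrying a \emph{cross} condition: for all $u,v\in V_i$, $r(u)\le r(v)\iff r(f(u))\le r(F(v))$ (its condition~(2)), and correspondingly enlarges the data defining $a$ and $b$ to include both $r(f(u))$ \emph{and} $r(F(u))$. With that, one gets $r(f(u))\le r(F(v))$ for every $u$ with $r(u)\le r(v)$, hence $a\le r(F(v))<b$ directly, so $F(v)\in I$; and placing $z$ in this smaller $I$ preserves the cross condition for the next round. Once you make that adjustment, the rest of your sketch (the three-case verification of $f(Y)\subseteq B_\gamma(x)$, the g.e.c.\ call, the symmetric back step via $F^{-1}$) goes through as you describe.
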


\begin{proof}
Let $V=\{v_i:i\geq 0\}$ and $W=\{w_i:i\geq 0\}$, where
$w_i=F(v_i)$. As in the proof of
Theorem~\ref{thm:1D}, we inductively construct a sequence of pairs of
sets $(V_i,W_i)$ ($i\geq 0$) and
isomorphisms $f_i:G[V_i]\rightarrow H[W_i]$, so that for all $i\geq
1$, $v_i\in V_i$, $w_i\in W_i$, $V_{i}\subseteq V_{i+1}$ and
$W_i\subseteq W_{i+1}$, and $f_{i+1}$ extends $f_i$.
As an additional part of the induction hypothesis, we require that
$f_i$ satisfies the following three conditions.
\begin{enumerate}
\item For every $u,v\in V$, $r(u)\leq r(v)$ if and only if
  $r(f(u))\leq r(f(v)).$
\item For every $u,v\in V$, $r(u)\leq r(v)$ if and only if
  $r(f(u))\leq r(F(v)).$
\item For every $u\in V$, $q(u)=q(f(u))$.
\end{enumerate}
The first two conditions are those stated in
Lemma~\ref{lemma:isometry_conditions}, so this implies that $f_i$ is a
step-isometry at level $(\delta, \gamma).$ We can also conclude from
this lemma that for all $u,v\in V$, $r(u)\leq r(v)$ if and only if
$r(F(u))\leq r(F(v))$.

Let $V_0=\{ v_0\}$ and $W_0=\{ w_0\}$, and set
$f_0(v_0)=w_0$. Conditions (1) and (3) follow as in the proof of
Theorem~\ref{thm:1D}. Condition (2) follows from the fact that
$w_0=F(v_0=f(v_0)$. For the induction step, fix $i\geq 0$. we construct $f_{i+1}$ from
$f_{i}$ by first finding an image of $v_{i+1}$. In the following, $f$
refers to $f_{i}$, and $v=v_{i+1}$.

Let
\begin{eqnarray*}
M_a&=&
\{u:\, u\in V_i\mbox{ and }r(F(u))\leq r(F(v))\},\\
M_b&=&\{u:\, u\in V_i\mbox{ and } r(F(u))>r(F(v))\},
\end{eqnarray*}
and
\begin{eqnarray*}
a&=&\max\{x:\,x=r(f(u))\mbox{ or }x=r(F(u))\mbox{ where } u\in M_a\},\\
b&=&\min\{x:\,x=r(f(u))\mbox{ or }x=r(F(u))\mbox{ where } u\in M_b\}.\\
\end{eqnarray*}
We have that $a<b$, since the order of the representatives of vertices
in $V_i$ is preserved under $f$ and under $F$. (See the similar
argument in the proof of Theorem \ref{thm:1D}.)

In  order to maintain conditions (1) and (2) of the induction hypothesis, $r(f(v))$ should lie
in $[a,b)$, and because of condition (3), $q(f(v))$ must equal $q(v)$. Let $k=q(v)$, and
  consider the interval $I=(k\gamma+a,k\gamma+b)$. From the definition
  of $a$ and $b$ it follows that $F(v)\in I$.

The remainder of the
proof is now analogous to the proof of Theorem~\ref{thm:1D} and so
is only sketched here.
Let $x=F(v)$. We can show that $f(B_\delta(v)\cap V_i)\subseteq B_{\gamma}(x)$.
We can then invoke the $\gamma$-g.e.c. condition of $H$ to find a
vertex $z$ in $I$ which is correctly joined to the vertices in $W_i$
so that an isomorphism is maintained if we set $f(v)=w$. Finally, we
finish the induction step by going back and finding a suitable image
$f^{-1}(w_{i+1})$.
\end{proof}

Theorem~\ref{thm:1D} extends to $\mathbb{R}^n$ with $n>1,$ provided we
use the \emph{product metric}; that is, the metric derived from the $L_\infty$
norm, defined by:
\[
d(u,v)=\max\{ |v_i-u_i|\,:1\leq i\leq n\},
\]
where $u_i$ denotes the $i$-th component of $u$.
Hence, we obtain unique isotypes of
infinite random geometric graphs in all finite dimensions.  For the remainder
of the section, $n$ is a positive integer, and $d$ is assumed to be
the metric defined above.

\begin{theorem}
\label{thm:nD}
Consider the metric space $(\Rrr^n,d)$, where $d$ is the product metric
defined above.
Let $V$ and $W$ be two countable sets dense in $\Rrr^n$, and let
$\delta,\gamma>0$.
If $G$ is a geometric $\delta$-graph with vertex set $V$ and $H$ is a geometric $\gamma$-graph with vertex set $W$, then $G\cong H$. In particular, for
all choices of $V$ and $\delta$, there is unique isomorphism type of
geometric $\delta$-graphs in $(\Rrr^n,d)$, written $GR_n$.
\end{theorem}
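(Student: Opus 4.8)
The plan is to adapt the back-and-forth construction in the proof of Theorem~\ref{thm:1D} so that it acts coordinate by coordinate. The key elementary observation is that for the product metric one has $\lfloor d(u,v)/\delta\rfloor=\max_{1\le j\le n}\lfloor |u_j-v_j|/\delta\rfloor$, since $d(u,v)=\max_j|u_j-v_j|$ and the floor function commutes with a finite maximum. Fix an anchor $v_0\in V$; its image under the map being built will serve as an anchor $w_0\in W$. Represent each $u\in V$ coordinatewise by $u_j=v_{0,j}+q_j(u)\delta+r_j(u)$ with $0\le r_j(u)<\delta$, and each $w\in W$ analogously with offset $\gamma$ and anchor $w_0$. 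By an argument identical to Lemma~\ref{lemma:isometry_conditions} applied in each coordinate and then combined with the displayed identity, a surjection $f\colon V\to W$ with $f(v_0)=w_0$ is a step-isometry at level $(\delta,\gamma)$ as soon as, for every coordinate $j$, ($1_j$) for all $u,v\in V$, $r_j(u)\le r_j(v)$ if and only if $r_j(f(u))\le r_j(f(v))$, and ($2_j$) $q_j(u)=q_j(f(u))$ for all $u\in V$. These coordinatewise conditions, for all $j$, are what will be maintained as an additional induction hypothesis.

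Now enumerate $V=\{v_i:i\ge0\}$ and $W=\{w_i:i\ge0\}$ with $v_0,w_0$ the anchors, and inductively build finite sets $V_i\subseteq V$, $W_i\subseteq W$ and isomorphisms $f_i\colon G[V_i]\to H[W_i]$ with $v_i\in V_i$, $w_i\in W_i$, $V_i\subseteq V_{i+1}$, $W_i\subseteq W_{i+1}$, $f_{i+1}$ extending $f_i$, and satisfying ($1_j$),($2_j$) for every $j$. For the forth step, with $v=v_{i+1}$ and $f=f_i$, put for each $j$
\[
a_j=\max\{r_j(f(u)):u\in V_i,\ r_j(u)\le r_j(v)\}, \qquad b_j=\min\{r_j(f(u)):u\in V_i,\ r_j(u)>r_j(v)\},
\]
taking $b_j=\gamma$ if the second set is empty; here $a_j$ is well defined since $v_0\in V_i$, and $a_j<b_j$ follows exactly as in the proof of Theorem~\ref{thm:1D}. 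To preserve the induction hypothesis, the image of $v$ must have $q_j=q_j(v)=:k_j$ and $r_j\in[a_j,b_j)$ for each $j$, hence must lie in the open box $I=\prod_{j=1}^{n}\bigl(w_{0,j}+k_j\gamma+a_j,\ w_{0,j}+k_j\gamma+b_j\bigr)$.

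The crux is the $n$-dimensional analogue of (\ref{moo}): for any $x\in I\cap W$, $f(B_\delta(v)\cap V_i)\subseteq B_\gamma(x)$. This is verified coordinatewise: if $d(u,v)<\delta$ then $|u_j-v_j|<\delta$ for each $j$, which forces $q_j(u)\in\{k_j-1,k_j,k_j+1\}$; when $q_j(u)=k_j$ one uses ($2_j$), when $q_j(u)=k_j-1$ one uses that $r_j(u)>r_j(v)$ so $r_j(f(u))\ge b_j$, and when $q_j(u)=k_j+1$ that $r_j(u)\le r_j(v)$ so $r_j(f(u))\le a_j$; together with $r_j(x)\in(a_j,b_j)$, each case gives $|f(u)_j-x_j|<\gamma$ by the same estimate as in the proof of Theorem~\ref{thm:1D}, and taking the maximum over $j$ yields the claim. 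Since $G$ has threshold $\delta$, $N(v)\cap V_i\subseteq B_\delta(v)\cap V_i$, so with $A=f(N(v)\cap V_i)$ and $B=(W_i\cap B_\gamma(x))\setminus A$ we obtain disjoint finite sets with $A\cup B\subseteq B_\gamma(x)\cap W_i$. Choosing $x\in I\cap W$ to avoid $W_i$ and the finitely many closed boundary sets $\{y:d(y,w)=\gamma\}$, $w\in W_i$ (possible since $W$ is dense and these sets have empty interior in the open box $I$), so that $d(x,w)>\gamma$ for every $w\in W_i\setminus B_\gamma(x)$, and then $\gamma'>0$ small enough that $B_{\gamma'}(x)\subseteq I$, $\gamma'<\gamma$, and $d(x,w)>\gamma+\gamma'$ for all such $w$, the $\gamma$-g.e.c.\ property of $H$ supplies a vertex $z\in B_{\gamma'}(x)$ correctly joined to $A$ and $B$. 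Then $z\notin W_i$, by threshold $z$ is non-adjacent to every $w\in W_i\setminus B_\gamma(x)$, so $N_H(z)\cap W_i=A$, and $z\in I$ keeps ($1_j$),($2_j$) intact; set $f_{i+1}(v)=z$. The back step is symmetric, using the $\delta$-g.e.c.\ property of $G$ to produce a preimage of $w_{i+1}$. Then $\bigcup_i f_i\colon G\to H$ is an isomorphism. The ``in particular'' clause follows by taking $W=V$ and $\gamma=\delta$, together with the existence of a geometric $\delta$-graph on $V$ (for instance $GR(V,\delta,\sigma)$ by Theorem~\ref{explicit}, or almost surely a sample of $\mathrm{LARG}(V,\delta,p)$ by Theorem~\ref{random}).

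I expect the main obstacle to be the simultaneous coordinatewise case analysis in the proof of the analogue of (\ref{moo}), together with pinning down the box $I$ so that membership in $I$ encodes \emph{exactly} the step-isometry conditions ($1_j$),($2_j$) while also forcing the images of the $\delta$-neighbours of $v$ into $B_\gamma(x)$. The remaining point, ruling out adjacency of $z$ to vertices of $W_i$ far from $x$, is routine, but unlike in dimension one it requires a genericity choice of $x$ avoiding boundaries of $L_\infty$-balls rather than merely avoiding two endpoints.
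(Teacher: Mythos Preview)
Your proposal is correct and follows essentially the same route as the paper. The paper isolates the coordinatewise step-isometry criterion as Lemma~\ref{lemma:isometry_conditions_Rn}, defines the same $a_j$, $b_j$ and box $I$, and then simply writes ``the remainder of the proof is analogous to that of Theorem~\ref{thm:1D}, and so is omitted''; you have written out precisely those omitted details, including the coordinatewise verification of the analogue of~(\ref{moo}).

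One point worth noting: your genericity choice of $x\in I\cap W$ avoiding the finitely many sphere-sets $\{y:d(y,w)=\gamma\}$, $w\in W_i$, is not merely cosmetic. Both the paper's one-dimensional proof and its sketch for $\Rrr^n$ assert that the vertex $z$ produced by the $\gamma$-g.e.c.\ property is non-adjacent to \emph{every} vertex of $W_i\setminus A$, but the g.e.c.\ condition only controls $B=W_i\cap B_\gamma(x)\setminus A$; for $w\in W_i$ with $d(x,w)=\gamma$ exactly, a nearby $z$ could satisfy $d(z,w)<\gamma$ and hence be adjacent to $w$ without violating the threshold. Your extra step of choosing $x$ off these measure-zero boundaries, and then shrinking $\gamma'$ so that $d(z,w)>\gamma$ for all $w\in W_i\setminus B_\gamma(x)$, cleanly closes this gap. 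In dimension one the same issue is present but easier to overlook since the ``sphere'' is just two points; in $\Rrr^n$ with the product metric the boundary of an $L_\infty$-ball is a genuine $(n{-}1)$-dimensional set, so your remark that it has empty interior in the open box $I$ is exactly what is needed.
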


Theorem~\ref{thm:nD} is sensitive to the choice of metric. In
Section~\ref{none}, we will show that the conclusion of Theorem~\ref{thm:nD} for $d$ the Euclidean metric
fails even for $n=2.$ The following provides
the key tool for our proof of Theorem~\ref{thm:nD}. As the proof is
straightforward generalization of
Lemma~\ref{lemma:isometry_conditions}, it is omitted.

\begin{lemma}
\label{lemma:isometry_conditions_Rn}
Let $V$ and $W$ be subsets of $\Rrr^n$ with the $L_\infty$-metric, let
$v_0\in V$ and $w_0\in W$, and
let $\delta$ and $\gamma$ be two non-negative real numbers.

Then a surjective function $f:V\rightarrow W$ is a step-isometry
at level $(\delta,\gamma)$ if the following two conditions hold for
all $u,v\in V$ and for all $i$, $1\leq i\leq n$:
\begin{enumerate}
\item $r(u_i)\leq
  r(v_i)$ if and only if $r(f(u)_i)\leq r(f(v)_i).$
\item $q(u_i)=q(f(u)_i)$,
\end{enumerate}
where the representation of the $i$-th coordinate of elements of $V$
has offset $\delta $ and anchor $(v_0)_i$ the representation of the
$i$-th coordinate of elements of $W$
has offset $\gamma $ and anchor $(w_0)_i$,
\end{lemma}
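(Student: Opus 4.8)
The plan is to reduce the $n$-dimensional statement to the one-dimensional Lemma~\ref{lemma:isometry_conditions} by combining two elementary facts: that the map $t\mapsto\lfloor t/\delta\rfloor$ is non-decreasing, hence commutes with a finite maximum, and that conditions (1) and (2) restricted to a fixed coordinate $i$ are exactly the hypotheses of Lemma~\ref{lemma:isometry_conditions} for the $i$-th coordinate projections (with offsets $\delta$, $\gamma$ and anchors $(v_0)_i$, $(w_0)_i$). Since only the ``if'' direction is asserted, no converse is needed, and surjectivity of $f$ is given.

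First I would fix $u,v\in V$ and a coordinate $i$ and observe that the order of $i$-th coordinates is preserved under $f$: by condition (2), $q(u_i)=q(f(u)_i)$ and $q(v_i)=q(f(v)_i)$, and by condition (1), $r(u_i)\le r(v_i)\iff r(f(u)_i)\le r(f(v)_i)$; since every element has the form $v_0+q\delta+r$ with $0\le r<\delta$ (and likewise on the $W$ side with offset $\gamma$), these two facts force $u_i\le v_i\iff f(u)_i\le f(v)_i$. Consequently the ``without loss of generality $v_i\ge u_i$'' reduction in the proof of Lemma~\ref{lemma:isometry_conditions} transfers verbatim to coordinate $i$. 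Running that computation — writing $|u_i-v_i|=(q(v_i)-q(u_i))\delta+(r(v_i)-r(u_i))$ with correction term $s_i\in\{0,1\}$ according to whether $r(v_i)\ge r(u_i)$, and using (1) and (2) to match the quotient differences and the correction terms on the two sides — yields
\[
\lfloor |u_i-v_i|/\delta\rfloor=\lfloor |f(u)_i-f(v)_i|/\gamma\rfloor\qquad(1\le i\le n).
\]

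Next I would assemble the coordinates. Since $d(u,v)=\max_{1\le i\le n}|u_i-v_i|$ and $t\mapsto\lfloor t/\delta\rfloor$ is non-decreasing, the floor of the maximum equals the maximum of the floors, and the same holds on the $W$ side with offset $\gamma$. Combining this with the per-coordinate identity gives
\[
\lfloor d(u,v)/\delta\rfloor=\max_i\lfloor |u_i-v_i|/\delta\rfloor=\max_i\lfloor |f(u)_i-f(v)_i|/\gamma\rfloor=\lfloor d(f(u),f(v))/\gamma\rfloor,
\]
so $f$ is a step-isometry at level $(\delta,\gamma)$.

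I do not anticipate a real obstacle: the argument is genuinely a coordinatewise repetition of Lemma~\ref{lemma:isometry_conditions}, and the only points needing care are the commutation of $\lfloor\cdot/\delta\rfloor$ with the maximum and the remark that the ``WLOG $v_i\ge u_i$'' reduction is applied independently in each coordinate (licensed by the per-coordinate order-preservation above), not globally. It is also worth stating explicitly why only an implication, rather than an equivalence, holds here: an $L_\infty$ step-isometry may permute or otherwise scramble the coordinates, so conditions (1)--(2) are sufficient but not necessary once $n>1$.
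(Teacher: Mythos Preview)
Your proposal is correct and follows the same approach the paper intends: the paper omits the proof entirely, remarking only that it is a ``straightforward generalization of Lemma~\ref{lemma:isometry_conditions},'' and your coordinatewise application of that lemma together with the observation that $\lfloor\,\cdot\,/\delta\rfloor$ commutes with a finite maximum is precisely that generalization. Your closing remark on why the converse fails for $n>1$ is a nice addition and explains why the lemma is stated as a one-way implication.
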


\begin{proof}[Proof of Theorem~\ref{thm:nD}]
Let $V=\{v_i:i\geq 0\}$ and $W=\{w_i:i\geq 0\}$. We inductively
construct a sequence of pairs of sets $(V_i,W_i)$ ($i\geq 0$) and
isomorphisms $f_i:G[V_i]\rightarrow H[W_i]$, so that for all $i\geq
1$, $v_i\in V_i$, $w_i\in W_i$, $V_{i}\subseteq V_{i+1}$ and
$W_i\subseteq W_{i+1}$, and $f_{i+1}$ extends $f_i$.
As an additional
induction hypothesis we require that $f_i$ satisfies conditions
(1) and (2) from Lemma~\ref{lemma:isometry_conditions_Rn}.

As in the proof of Theorem~\ref{thm:1D}, for the base case we take $V_0=\{ v_0\}$, $W_0=\{ w_0\}$
and $f_0(v_0)=w_0$.
For the induction step, fix $i\geq 0$. we construct $f_{i+1}$ from
$f_{i}$ by first finding an image of $v_{i+1}$.  In the following, $f$
refers to $f_{i}$, and $v=v_{i+1}$.

For all $j$, $1\leq j\leq n$, define
\begin{eqnarray*}
a_j&=&\max\{r(f(u)_j):u\in V_i\mbox{ and }r(u_j)\leq r(v_j)\},\\
b_j&=&\min\{r(f(u)_j):u\in V_i\mbox{ and } r(u_j)>r(v_j)\}.
\end{eqnarray*}

In  order to maintain the induction hypothesis, for all $j$,
$r(f(v))_j$ should lie in interval $[a_j,b_j)$, and $q(f(v)_j)$ should
  be equal to $q(v_j)$. Let $k_j=q(v_j)$, and consider the product set
\[
I=\prod_{1\leq j\leq n}(k_j\gamma+a_j,k_j\gamma+b_j).
\]
Any vertex in $I$ will qualify as a candidate for $f(v)$ so that $f$
satisfies conditions (1) and (2) from
Lemma~\ref{lemma:isometry_conditions_Rn}. The remainder of the proof
is analogous to that of Theorem~\ref{thm:1D}, and so is
omitted.
\end{proof}

A \emph{step-isometric isomorphism} is an isomorphism of graphs that
is a step-isometry. In the base step in the proof of
Theorem~\ref{thm:nD}, if we are given induced subgraphs $V_0$ and
$W_0$ such that $f_0:V_0 \rightarrow W_0$ is a step-isometric
isomorphism, then the rest of the proof follows as before. Hence, we
have the following corollary, which shows that the graphs
$GR_n$ act transitively on step-isometric isomorphic
induced subgraphs.

\begin{corollary}
Let $G$ and $H$ be finite induced subgraphs of $GR_n$ for some positive integer
$n$. A step-isometric isomorphism $f:G\rightarrow H$ extends to an
automorphism of $GR_n.$
\end{corollary}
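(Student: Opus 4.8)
The plan is to re-run the back-and-forth construction from the proof of Theorem~\ref{thm:nD}, but seeded with the given step-isometric isomorphism $f_0 = f$ instead of a single pair of vertices. Write $GR_n$ with vertex set $V$ (playing both roles of source and target), fix enumerations $V = \{v_i : i \geq 0\}$ of the domain copy and $W = \{w_i : i \geq 0\}$ of the codomain copy, and take $V_0 = V(G)$, $W_0 = V(H)$, $f_0 = f$. The only thing that has to be checked before the inductive machinery applies is that $f_0$, as the base case, satisfies conditions (1) and (2) of Lemma~\ref{lemma:isometry_conditions_Rn} for a suitable choice of anchors; this is exactly the content of $f_0$ being a step-isometric isomorphism. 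Concretely, pick any $v_0 \in V(G)$, set $w_0 = f(v_0)$, and use these as the anchors for the coordinatewise representations of $V$ and $W$. Since $f$ is a step-isometry at level $(\delta,\delta)$, the characterisation in Lemma~\ref{lemma:isometry_conditions_Rn} (together with the ``only if'' direction, which is the $n$-dimensional analogue of Lemma~\ref{lemma:isometry_conditions} and holds for the same reasons) gives that conditions (1) and (2) hold for all $u,v \in V(G)$ and all coordinates $j$. Because $f$ is also a graph isomorphism $G \to H$, the base case of the induction in the proof of Theorem~\ref{thm:nD} is met.

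From here the induction step is verbatim the one in the proof of Theorem~\ref{thm:nD}: given $f_i$ a step-isometric isomorphism $G[V_i] \to H[W_i]$ with $V(G) \subseteq V_i$, $W(H) \subseteq W_i$, one goes \emph{forth} by adding $v_{i+1}$ (if not already present), forming the product interval $I = \prod_j (k_j\gamma + a_j, k_j\gamma + b_j)$ with $\gamma = \delta$, verifying $f_i(B_\delta(v)\cap V_i) \subseteq B_\delta(x)$ for $x \in I$, and applying the $\delta$-g.e.c.\ property of $GR_n$ (in its codomain role) to find an image $z \in I$ correctly joined to the relevant vertices of $W_i$; then one goes \emph{back} to secure a preimage of $w_{i+1}$ by the symmetric argument. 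Taking $g = \bigcup_{i} f_i$ yields a bijection $V \to V$ that is a step-isometry (each $f_i$ is, and the conditions of Lemma~\ref{lemma:isometry_conditions_Rn} are preserved in the limit since they are pointwise) and a graph isomorphism (each $f_i$ is an induced-subgraph isomorphism and every vertex eventually enters both $V_i$ and $W_i$). Hence $g$ is an automorphism of $GR_n$ extending $f$.

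The only genuine point requiring care — and the one I would write out rather than leave to the reader — is that enlarging the base pair $(V_0,W_0)$ from a singleton to an arbitrary finite step-isometric pair does not break any invariant used in the induction step. The induction in Theorem~\ref{thm:nD} never used $|V_0| = 1$: the quantities $a_j, b_j$ are maxima and minima over the current finite set $V_i$, and the argument that $a_j < b_j$ uses only that the order of representatives in each coordinate is preserved by $f_i$, which is condition (1). The one subtlety is the anchor: when $f_0$ is a singleton map $v_0 \mapsto w_0$ one automatically has $q((v_0)_j) = q((w_0)_j) = 0$ and $r((v_0)_j) = r((w_0)_j) = 0$, whereas here we must check that a consistent anchor choice exists making condition (2), $q(u_j) = q(f_0(u)_j)$, hold for \emph{all} $u \in V(G)$ simultaneously. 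This is where the hypothesis that $f$ is a step-isometry (not merely an isomorphism preserving the partial order of representatives) is essential, and it is the step I expect to be the main obstacle to a fully rigorous write-up: one argues that because $f$ is a step-isometry at level $(\delta,\delta)$, for any fixed $v_0$ and $w_0 = f(v_0)$ the coordinatewise quotients satisfy $q(u_j) - q((v_0)_j) = \lfloor (u_j - (v_0)_j)/\delta \rfloor$ and the analogous identity in $W$, and the step-isometry condition forces these to agree after the shift, so condition (2) holds with these anchors. Everything else is a routine invocation of the proof already given.
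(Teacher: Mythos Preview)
Your plan coincides with the paper's: the corollary is stated immediately after a one-sentence remark that the back-and-forth in the proof of Theorem~\ref{thm:nD} goes through unchanged if the base case $(V_0,W_0,f_0)$ is taken to be the given step-isometric isomorphism rather than a single vertex pair. You have simply written out what that remark leaves implicit.

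There is, however, a real gap in your justification of the base case, at exactly the point you flag as ``the one subtlety.'' You assert that the ``only if'' direction of Lemma~\ref{lemma:isometry_conditions_Rn} holds ``for the same reasons'' as the one-dimensional Lemma~\ref{lemma:isometry_conditions}, so that any step-isometry $f$ automatically satisfies the coordinatewise conditions (1) and (2) once anchors $v_0$ and $w_0=f(v_0)$ are fixed. This is false for $n\ge 2$: the step-isometry hypothesis controls only $\lfloor d(u,v)/\delta\rfloor=\max_j \lfloor |u_j-v_j|/\delta\rfloor$, and matching these maxima over all pairs does not force the individual coordinatewise quotients, or the order of coordinatewise representatives, to match. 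Concretely, in $GR_2$ with $\delta=1$ choose three pairwise non-adjacent vertices $p_1,p_2,p_3$ close to $(0,0)$, $(\tfrac12,0)$, $(0,\tfrac12)$ respectively (available by the $\delta$-g.e.c.\ property), let $G=H$ be the empty graph on $\{p_1,p_2,p_3\}$, and let $f$ fix $p_1$ and swap $p_2,p_3$. All pairwise $L_\infty$-distances lie in $(0,1)$, so $f$ is trivially a step-isometric isomorphism; yet $r((p_2)_1)>r((p_3)_1)$ while $r(f(p_2)_1)=r((p_3)_1)<r((p_2)_1)=r(f(p_3)_1)$, so condition (1) fails in the first coordinate, and no anchor choice repairs this. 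Thus $f_0$ need not satisfy the induction hypothesis actually maintained in the proof of Theorem~\ref{thm:nD}, and the machinery cannot be invoked as you (and, in fairness, the paper's one-line remark) propose. A complete argument requires an additional step before the back-and-forth can start---for example, first composing $f$ with a signed coordinate permutation of $\mathbb{R}^n$, which is an $L_\infty$-isometry and hence extends to an automorphism of $GR_n$ via Theorem~\ref{thm:nD} itself.
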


Deleting a point from a dense set $V$ in $\Rrr^n$ gives
another dense set. Hence, we have the following
\emph{inexhaustibility} property.

\begin{corollary}
For all $n>0$ and vertices $x$ in $GR_n$, $GR_n-x \cong GR_n.$
\end{corollary}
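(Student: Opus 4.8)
The plan is to realize $GR_n$ concretely as a geometric $\delta$-graph on a countable dense subset of $(\Rrr^n,d)$, delete a vertex, and verify that what remains is again a geometric $\delta$-graph on a countable dense set; Theorem~\ref{thm:nD} then does the rest. So first I would fix a representative $G=(V,E)$ of the isomorphism type $GR_n$, where $V$ is a countable dense subset of $\Rrr^n$ and $G$ is a geometric $\delta$-graph for some $\delta>0$ (such a $G$ exists, e.g.\ via the deterministic construction $GR(V,\delta,\sigma)$ together with Theorem~\ref{explicit}, or via Theorem~\ref{random}). A vertex $x$ of $GR_n$ corresponds to a point $x\in V$, and $GR_n-x\cong G[V']$ where $V'=V\setminus\{x\}$. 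As remarked above, $V'$ is again countable and dense in $\Rrr^n$, and $G[V']$ still has threshold $\delta$ since deleting a vertex creates no new edges. Hence, by Theorem~\ref{thm:nD}, it suffices to show that $G[V']$ is $\delta$-g.e.c.

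The one point requiring care---and essentially the only obstacle---is that the vertex $z$ produced by the $\delta$-g.e.c.\ property of $G$ might happen to be exactly $x$, the vertex we have removed. To sidestep this I would apply the $\delta$-g.e.c.\ property of $G$ not to a given admissible triple $(y,A,B)$ (with $0<\delta'<\delta$, $y\in V'$, and $A,B$ disjoint finite subsets of $B_\delta(y)\cap(V'\setminus\{y\})$) but to the triple $(y,A,B^{*})$, where $B^{*}=B$ if $d(x,y)\geq\delta$ and $B^{*}=B\cup\{x\}$ if $d(x,y)<\delta$. In either case $A$ and $B^{*}$ are disjoint finite subsets of $B_\delta(y)\cap(V\setminus\{y\})$, so $G$ supplies a vertex $z\notin A\cup B^{*}\cup\{y\}$ correctly joined to $A$ and $B^{*}$ with $d(u,z)<\delta$ for all $u\in A\cup B$ and $d(y,z)<\delta'$. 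If $x\in B^{*}$ then $z\neq x$ because $z\notin B^{*}$; if $x\notin B^{*}$ then $d(y,x)\geq\delta>\delta'>d(y,z)$, so again $z\neq x$. Thus $z\in V'$, and $z$ is correctly joined to $A$ and $B$ since $B\subseteq B^{*}$; this verifies the $\delta$-g.e.c.\ property for $G[V']$.

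It follows that $G[V']$ is a geometric $\delta$-graph on a countable dense subset of $\Rrr^n$, so $G[V']\cong GR_n$ by Theorem~\ref{thm:nD}, and therefore $GR_n-x\cong GR_n$. The argument is uniform in $n$, and the only ingredient beyond Theorem~\ref{thm:nD} is the elementary observation in the previous paragraph, so I do not anticipate any genuine difficulty---only the small amount of bookkeeping needed to keep the newly chosen vertex $z$ away from the deleted vertex $x$.
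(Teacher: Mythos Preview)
Your argument is correct and follows the paper's approach: the paper simply remarks that deleting a point from a dense set in $\Rrr^n$ leaves a dense set and then invokes Theorem~\ref{thm:nD}. You additionally spell out the one point the paper leaves implicit --- that $G[V\setminus\{x\}]$ is still $\delta$-g.e.c.\ --- and your device of adjoining $x$ to $B$ when $d(x,y)<\delta$ is a clean way to guarantee the witness $z$ avoids the deleted vertex.
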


We can combine Theorems~\ref{thm:1Disometry}~and~\ref{thm:nD} to obtain a
result about isomorphisms between graphs with vertex sets in $\Rrr^n$
if there exist a special type of map between the sets. Given a set
$V\subseteq \Rrr^n$, denote the $i$-th component set of $V$ as:
\[
V_i=\{ x_i:\, x\in V\}.
\]

\begin{theorem}
\label{thm:nDisometry}
Consider the metric space $(\Rrr^n,d)$, where $d$ is the product metric
defined above.
Let $V$ and $W$ be two countable sets in $\Rrr^n$, and let
$\delta,\gamma>0$. Assume that for all $1\leq i\leq n$, there exists a
step-isometry at level $(\delta,\gamma)$ from $V_i$ to $W_i$.
If $G$ is a geometric $\delta$-graph with vertex set $V$ and $H$ is a
a geometric $\gamma$ graph with vertex set $W,$ then $G\cong H$.
\end{theorem}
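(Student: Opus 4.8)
The plan is to run a back-and-forth construction that fuses the coordinatewise argument of Theorem~\ref{thm:nD} with the reference-map technique of Theorem~\ref{thm:1Disometry}. Fix, for each $i$, a step-isometry $F_i\colon V_i\to W_i$ at level $(\delta,\gamma)$, and let $F(x)_i=F_i(x_i)$. Enumerate $V=\{v_i:i\ge0\}$ and $W=\{w_i:i\ge0\}$, choose anchors $v_0\in V$ and $w_0\in W$, and inductively build finite partial isomorphisms $f_i\colon G[V_i]\to H[W_i]$ with $f_{i+1}$ extending $f_i$, $v_i\in V_i$, $w_i\in W_i$, $V_i\subseteq V_{i+1}$, $W_i\subseteq W_{i+1}$. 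As induction hypotheses we keep, for every coordinate $j$, conditions~(1) and~(2) of Lemma~\ref{lemma:isometry_conditions_Rn} (which make $f_i$ a step-isometry) together with the coordinatewise form of condition~(2) of Theorem~\ref{thm:1Disometry}: for all $u,v\in V_i$ and all $j$, $r(u_j)\le r(v_j)$ if and only if $r(f_i(u)_j)\le r(F_j(v_j))$, the representations in coordinate $j$ having offsets $\delta$ and $\gamma$ and anchors $(v_0)_j$ and $(w_0)_j$. The base case is $V_0=\{v_0\}$, $W_0=\{w_0\}$, $f_0(v_0)=w_0$.

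At the forth step we add $v=v_{i+1}$. In each coordinate $j$ we reproduce the computation of Theorem~\ref{thm:1Disometry}: with $M_{a,j}=\{u\in V_i:r(F_j(u_j))\le r(F_j(v_j))\}$ and $M_{b,j}=V_i\setminus M_{a,j}$, put
\[
a_j=\max\{x:\,x=r(f_i(u)_j)\text{ or }x=r(F_j(u_j)),\ u\in M_{a,j}\},\qquad b_j=\min\{x:\,x=r(f_i(u)_j)\text{ or }x=r(F_j(u_j)),\ u\in M_{b,j}\}.
\]
One checks $a_j<b_j$ and, with $k_j=q(v_j)=q(F_j(v_j))$, that $F_j(v_j)\in(k_j\gamma+a_j,k_j\gamma+b_j)$; thus the product box $I=\prod_{1\le j\le n}(k_j\gamma+a_j,k_j\gamma+b_j)$ contains the point $(F_1(v_1),\dots,F_n(v_n))$. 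We then need a vertex $x\in W\cap I$. Granting one, the rest runs as in Theorem~\ref{thm:nD}: since $G$ has threshold $\delta$ and $q(v_j)=k_j$ for all $j$, one gets $f_i(B_\delta(v)\cap V_i)\subseteq B_\gamma(x)$; with $A=f_i(N(v)\cap V_i)$ and $B=(W_i\cap B_\delta(x))\setminus A$, the $\gamma$-g.e.c.\ property of $H$ yields $z\in B_\epsilon(x)\subseteq I$ (for suitable $\epsilon>0$) correctly joined to $A$ and $B$. Setting $f_{i+1}(v)=z$, the condition $z\in I$ makes all the induction hypotheses persist across the new pairs, and $f_{i+1}$ is an isomorphism. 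The back step is symmetric, using $F^{-1}$ and the $\delta$-g.e.c.\ property of $G$ to locate $f_{i+1}^{-1}(w_{i+1})$; then $\bigcup_i f_i$ is an isomorphism $G\to H$.

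The obstacle is exactly the step ``we need a vertex $x\in W\cap I$''. In Theorem~\ref{thm:nD} it is free since $W$ is dense in $\Rrr^n$, and in Theorem~\ref{thm:1Disometry} (the case $n=1$) the reference point $F(v)$ itself lies in $W\cap I$ because there $F$ maps onto $W$. Here $(F_1(v_1),\dots,F_n(v_n))$ lies only in $\prod_i W_i$, and an open box meeting each $W_i$ in its factor need not meet $W$ at all --- this already happens for $n=2$ if $W$ is supported on two opposite cells of the $\gamma$-grid. The resolution is to exploit the hypothesis at full strength: one shows that the existence of component step-isometries forces the existence of component maps whose assembly $x\mapsto(F_i(x_i))_i$ is a bijective step-isometry from $V$ onto $W$, and strengthens the induction to require $w_0=F(v_0)$ and $w_i=F(v_i)$; then $x=F(v)\in W\cap I$ is a legitimate choice and the construction becomes a direct amalgam of the proofs of Theorems~\ref{thm:1Disometry} and~\ref{thm:nD}. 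Establishing that such compatible component maps always exist is the hard part.
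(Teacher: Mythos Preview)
Your approach matches exactly what the paper intends: it merely says the proof is ``a straightforward generalization of the proofs of Theorems~\ref{thm:1Disometry} and~\ref{thm:nD}'' and omits the details. You have carried out that fusion carefully and located the precise point where it fails: to apply the $\gamma$-g.e.c.\ property of $H$ one needs a centre $x\in W$ lying in the product box $I$, and the natural candidate $(F_1(v_1),\dots,F_n(v_n))$ belongs only to $\prod_j W_j$, which need not equal $W$.

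Your proposed resolution, however, cannot work, because the statement as written is in fact false. Take $n=2$, $\delta=\gamma=1$, let $Q$ be a countable dense subset of $[0,1)$, and set
\[
V=(Q\cup(2+Q))^2,\qquad W=Q^2\cup(2+Q)^2.
\]
Then $V_1=V_2=W_1=W_2=Q\cup(2+Q)$, so the identity is a bijective step-isometry on each coordinate, and both $V$ and $W$ are dense in themselves, so geometric $1$-graphs on them exist by Theorem~\ref{random}. But any threshold-$1$ graph on $V$ has no edge crossing the gap $[1,2)$ in either coordinate, so a geometric $1$-graph $G$ on $V$ has exactly four connected components (one per unit square, each of diameter at most $2$ by the $1$-g.e.c.\ property); the same reasoning gives exactly two components for any geometric $1$-graph $H$ on $W$. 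Hence $G\not\cong H$, and in particular no choice of the $F_j$ can make $x\mapsto(F_j(x_j))_j$ a bijection of $V$ onto $W$. The obstacle you isolated is therefore not a repairable technicality but a defect in the hypothesis itself. The natural correct version would assume a single bijective step-isometry $F:V\to W$ in the $L_\infty$ metric (equivalently, coordinate step-isometries whose product actually maps $V$ onto $W$); under that hypothesis your argument, with $x=F(v)$ and the enumeration $w_i=F(v_i)$, goes through verbatim as the direct amalgam you describe.
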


The proof is a straightforward generalization of the proofs of Theorems
\ref{thm:1Disometry} and \ref{thm:nD}, and is therefore omitted.

\section{Non-isomorphism results for Euclidean space}\label{none}

The choice of metric plays an important role in our isomorphism
results in Section~3. We demonstrate that there are non-isomorphic geometrically
e.c.\ graphs in the plane with the usual Euclidean metric (denoted by
$d$).

\begin{theorem}\label{non2}
Let $V$ be a countable set dense in $(\Rrr^2,d),$ and let $G$ and $H$ be two graphs generated by the model
$\mathrm{LARG}(V,p)$, where $0<p<1$. Then with probability 1, $G\ncong H.$
\end{theorem}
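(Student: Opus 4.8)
The plan is to exhibit a graph-theoretic invariant that, in the Euclidean plane, carries metric information finely enough to distinguish two independent random instances almost surely. Corollary~\ref{useful} tells us that any isomorphism $f$ between two geometric $\delta$-graphs on convex-closure vertex sets must satisfy $\lfloor d(u,v)/\delta\rfloor=\lfloor d(f(u),f(v))/\delta\rfloor$ for all pairs $u,v$, i.e.\ $f$ is a step-isometry at level $(\delta,\delta)$ from $V$ to itself (here $\gamma=\delta=1$ since both graphs use the same threshold under $\mathrm{LARG}(V,p)$, and $\overline V=\Rrr^2$ is convex). So the first step is to understand the step-isometries of a dense subset of $(\Rrr^2,d)$ at a fixed level. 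Unlike the $L_\infty$ case, where Lemma~\ref{lemma:isometry_conditions_Rn} shows step-isometries are built coordinatewise and are plentiful, in the Euclidean metric the constraint $\lfloor|u-v|\rfloor=\lfloor|f(u)-f(v)|\rfloor$ for \emph{all} pairs is extremely rigid. I would first prove a \textbf{rigidity lemma}: any step-isometry $f$ of a dense subset $V$ of $(\Rrr^2,d)$ at level $(1,1)$ extends to (or agrees with) an isometry of $\Rrr^2$ composed with a small perturbation — more precisely, that $f$ must preserve each annular region $\{v:\ \lfloor|v-v_0|\rfloor=m\}$ about any fixed base point, and, by using enough base points in general position, that $f$ is within bounded distance of a genuine Euclidean isometry. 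The cleanest route is: fix three (or more) non-collinear anchor points, observe that the ``integer shell'' of a point with respect to each anchor is preserved, and argue that the simultaneous level sets shrink to force $|f(u)-f(v)|$ and $|u-v|$ to lie in a common unit interval $[m,m+1)$ for all pairs, which — letting the pairs range over a dense set — pins down $f$ up to the finitely many rigid motions plus a subunit wobble.

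The second step is to leverage this rigidity probabilistically. Having shown that any isomorphism $G\to H$ must, at the level of vertices, be close to one of a \emph{controlled} (in fact, at most continuum-many, but parametrized by a finite-dimensional family of rigid motions) set of maps, I would produce a local adjacency event that no such near-isometric map can respect. Concretely: pick a pair of vertices $u,v\in V$ with $d(u,v)$ slightly less than $1$, so that in both $G$ and $H$ the pair $\{u,v\}$ is ``eligible'' to be an edge, and the presence/absence of the edge $uv$ is an independent fair-ish coin in each graph. If $f$ were an isomorphism close to a rigid motion $\rho$, then $f(u),f(v)$ would be the two vertices of $V$ nearest to $\rho(u),\rho(v)$, hence determined by $\rho$ up to finitely many choices; the edge $f(u)f(v)$ in $H$ is then an independent coin flip (in $H$) whose value must match that of $uv$ in $G$. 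Since these are independent across the two graphs, the match fails with probability $\ge p(1-p)>0$ per test. By choosing infinitely many such pairs $(u_k,v_k)$ spread out so that the corresponding tests are mutually independent in each of $G$ and $H$, the probability that \emph{all} tests succeed for a \emph{fixed} candidate motion $\rho$ is $0$; then a union bound over a countable dense net of rigid motions (the full motion group being separable, and the rigidity lemma saying the ``wobble'' can be absorbed into the net at scale smaller than the gaps forced by density) kills the whole event $G\cong H$.

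I expect the \textbf{main obstacle to be the rigidity lemma} — pinning down that Euclidean step-isometries of a dense set really are this inflexible, and in particular ruling out pathological $f$ that are globally far from any isometry but still happen to preserve every integer-distance-shell. The delicate point is that $V$ is only a countable dense set, not all of $\Rrr^2$, so $f$ need not be continuous and one cannot naively invoke the Mazur–Ulam theorem or Beckman–Quarles-type results; one must work with the combinatorial shell structure directly. A secondary, more technical obstacle is making the independence bookkeeping precise: ensuring the chosen test pairs $(u_k,v_k)$ yield edge-events that are genuinely independent in $G$ (they are, since distinct pairs give distinct potential edges) and that, after conditioning on the vertex locations of the image pairs under a fixed $\rho$, the corresponding edge-events in $H$ are still an independent family — which again holds because distinct image pairs are distinct potential edges of $H$ — and then assembling the union bound over the separable motion group so that the exceptional null sets, one per net point, still union to a null set. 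Once rigidity is in hand, the probabilistic core is a routine Borel–Cantelli / countable-union-of-null-sets argument in the spirit of the proof of Theorem~\ref{random}.
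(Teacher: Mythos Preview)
Your overall architecture matches the paper's: a rigidity lemma for Euclidean step-isometries, followed by a probabilistic edge-matching argument. But you have stated the rigidity lemma too weakly, and this creates a real gap in the second half.

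The paper's Lemma~\ref{lemnoni} proves the \emph{exact} statement: every step-isometry between dense subsets of $(\Rrr^2,d)$ is an isometry, full stop. There is no ``subunit wobble'' left over. The argument is a discrepancy-doubling trick: if some pair had $D(x,y)=|d(x,y)-d(x',y')|=\epsilon>0$, one builds (using density) an auxiliary quadrilateral whose diagonal must have discrepancy at least $2\epsilon$, and a preliminary step shows one can always arrange the starting pair to be far apart. Iterating forces $D>1$, contradicting the step-isometry hypothesis. Your sketch via ``integer shells about several anchors'' is plausible heuristics but does not obviously yield more than bounded discrepancy; you should aim for and prove the exact version.

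The reason this matters is your second step. With only ``$f$ is within bounded distance of some rigid motion $\rho$'', the images $f(u_k),f(v_k)$ are \emph{not} determined by $\rho$ up to finitely many choices: in a countable dense set there are infinitely many vertices in any neighbourhood of $\rho(u_k)$, so for a fixed $\rho$ the event ``some nearby pair in $H$ matches the edge $u_kv_k$ in $G$'' has probability $1$, and your product over $k$ never gets off the ground. The union bound over a countable net of motions cannot absorb this. By contrast, once you know $f$ is an actual isometry of $\Rrr^2$, it is determined by the images of three non-collinear points $v_1,v_2,v_3$; since these images must lie in the countable set $V$, there are only countably many candidate $f$'s, indexed by triples in $V$. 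The paper then arranges a \emph{good enumeration} $v_1,v_2,\ldots$ of $V$ with $d(v_i,v_{i+1})<\delta$ for all $i$ (so the edges $v_iv_{i+1}$ are genuine $p$-coins in both $G$ and $H$), and for each fixed triple $(u_1,u_2,u_3)\in V_n^3$ the probability that all $n-1$ compatibility tests succeed is at most $(p^2+(1-p)^2)^{\,n-1}$. Summing over the $n^3$ triples and letting $n\to\infty$ gives the null set.

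So: strengthen your rigidity lemma to exact isometry, replace the net-of-motions bookkeeping by the much simpler ``countably many isometries, one per triple of image points in $V$'', and use a good enumeration to guarantee your test edges are always live. With those changes your proof becomes essentially the paper's.
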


We have the following corollary, which is the antithesis of the
results in the previous section.

\begin{corollary}\label{noni}
Let $V$ be a countable set dense in $\Rrr^2$ equipped with the Euclidean metric, and $\delta >0$ fixed.
Then there exist infinitely many pair-wise non-isomorphic $\delta$-g.e.c.\ graphs with vertex
set $V.$
\end{corollary}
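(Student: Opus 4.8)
The plan is to obtain Corollary~\ref{noni} as a routine consequence of Theorem~\ref{non2} together with Theorem~\ref{random}, via a countable-additivity argument. Fix $\delta>0$ and $p\in(0,1)$, and let $V$ be the given countable set dense in $(\Rrr^2,d)$. First I would take a sequence $G_1,G_2,\dots$ of graphs generated \emph{independently} by $\mathrm{LARG}(V,\delta,p)$. By Theorem~\ref{random}, for each fixed $i$ the event $E_i$ that $G_i$ is $\delta$-g.e.c.\ has probability $1$; and since every graph generated by the LARG model has threshold $\delta$, on $E_i$ the graph $G_i$ is in fact a geometric $\delta$-graph with vertex set $V$.

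Next I would apply Theorem~\ref{non2} to pairs. For each pair $i\neq j$, the pair $(G_i,G_j)$ is distributed exactly as two graphs generated by the model on $V$, so the event $F_{i,j}=\{G_i\ncong G_j\}$ has probability $1$. Since the index set is countable, $\bigcap_i E_i\cap\bigcap_{i\neq j}F_{i,j}$ is a countable intersection of probability-$1$ events, hence has probability $1$; in particular it is nonempty. On any outcome in this intersection, the graphs $G_1,G_2,\dots$ form an infinite family of pairwise non-isomorphic $\delta$-g.e.c.\ graphs, all with vertex set $V$, which is precisely the assertion of the corollary.

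The only mild point to address is that Theorem~\ref{non2} is phrased for the model $\mathrm{LARG}(V,p)$ with an implicit threshold, while the corollary fixes an arbitrary $\delta>0$; this is harmless, since one may either observe that the geometric obstruction to isomorphism produced in the proof of Theorem~\ref{non2} is insensitive to (indeed covariant under) rescaling the threshold, or simply rescale $\Rrr^2$ by the appropriate constant --- a map that sends a dense set to a dense set and carries geometric $\delta$-graphs to geometric $\delta'$-graphs while preserving (non-)isomorphism. I expect all of the real difficulty to reside in Theorem~\ref{non2} itself; the corollary is just the observation that ``two graphs are almost surely non-isomorphic'' bootstraps, by countable additivity on an independent family, to ``countably many graphs are almost surely pairwise non-isomorphic,'' whence such a family exists.
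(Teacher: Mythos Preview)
Your proposal is correct and is essentially the approach the paper intends: the paper states Corollary~\ref{noni} as an immediate consequence of Theorem~\ref{non2} without spelling out a proof, and your countable-additivity argument (independent samples from $\mathrm{LARG}(V,\delta,p)$, apply Theorem~\ref{random} to each and Theorem~\ref{non2} to each pair, then intersect) is exactly the natural way to make that implication explicit. Your remark about the missing $\delta$ in the notation $\mathrm{LARG}(V,p)$ is well taken; the proof of Theorem~\ref{non2} in the paper uses an arbitrary threshold $\delta$ throughout, so no rescaling is actually needed.
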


For the proof of Theorem~\ref{non2}, we rely on the following geometric lemma.

\begin{lemma}\label{lemnoni}
Let $V$ and $W$ be dense subsets of $\Rrr^2$ equipped with the
Euclidean metric. Then every step-isometry from $V$ to $W$ is an
isometry.
\end{lemma}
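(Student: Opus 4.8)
The plan is to show that a step-isometry $f\colon V\to W$ at level $(\delta,\gamma)$ between dense subsets of the Euclidean plane must in fact satisfy $d(f(u),f(v))=\tfrac{\gamma}{\delta}d(u,v)$ for all $u,v$, and then (after rescaling one of the metrics, or simply noting that a surjective scaled isometry between dense sets with the stated floor-condition forces $\gamma=\delta$) conclude it is an isometry. The first reduction is to use density: by continuity of the Euclidean metric, $f$ extends to a map $\bar f\colon \overline V=\Rrr^2\to\overline W=\Rrr^2$, and it suffices to establish the scaling identity on a dense set, since both sides are continuous. So the real content is a rigidity statement: if $f$ approximately preserves distances up to the floor function at two incommensurable scales, over a dense set, then it preserves them exactly up to a constant factor.

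The key idea is the following. Fix $u,v\in V$ and write $D=d(u,v)$, $D'=d(f(u),f(v))$. The step-isometry condition gives $\lfloor D/\delta\rfloor=\lfloor D'/\gamma\rfloor$, but this alone is weak. To amplify it, I would \emph{interpolate}: since $V$ is dense, I can find a long chain $u=x_0,x_1,\dots,x_N=v$ of points of $V$ lying (nearly) on the segment $[u,v]$, with consecutive distances $d(x_i,x_{i+1})\approx D/N$ small. For each pair $(x_i,x_j)$ the floor condition pins down $d(f(x_i),f(x_j))$ to within an additive $\gamma$; summing along the chain and using the triangle inequality both ways forces $d(f(u),f(v))$ into a narrow window. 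More precisely: $\sum_i d(f(x_i),f(x_{i+1}))$ is squeezed between roughly $\tfrac{\gamma}{\delta}D - N\gamma$ and $\tfrac{\gamma}{\delta}D + N\gamma$ — which is useless as stated — so the chain argument must be paired with the reverse direction, bounding $D'$ in terms of a chain in $W$ pulled back through $f^{-1}$ (available since $f$ is surjective, though possibly not injective; one handles non-injectivity by noting $d(u,v)=0$ forces $\lfloor d(f(u),f(v))/\gamma\rfloor=0$, and a separate short argument rules out collapsing). The genuinely Euclidean input enters here: in $\Rrr^2$, for points nearly collinear, the Euclidean distance is nearly additive along the chain, and — crucially — \emph{the image chain must also be nearly collinear}, because a chain of total length only slightly exceeding the endpoint distance forces near-collinearity, a property special to strictly convex norms and false for $L_\infty$. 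This is what the lemma is really exploiting, and it is the main obstacle: one must show that $f$ sends (nearly) collinear configurations to (nearly) collinear configurations, and then that it sends midpoints to (approximate) midpoints, giving additivity $d(f(u),f(w))=d(f(u),f(v))+d(f(v),f(w))$ whenever $v$ lies between $u$ and $w$.

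Once additivity along Euclidean segments is in hand, I would proceed as in the classical characterization of isometries: fix an anchor $o\in V$ and a scale, show $d(f(o),f(x))$ is a fixed linear function of $d(o,x)$ along each ray (additivity plus the floor condition at two scales forces the slope to be the \emph{same} constant $c$ on every ray — otherwise comparing two points at equal distance from $o$ on different rays but equal distance from each other gives a contradiction with the floor condition for generic distances), and then use the parallelogram/median identity in $\Rrr^2$ to upgrade "preserves distances from $o$ up to factor $c$ and preserves betweenness" into "preserves all pairwise distances up to factor $c$". Finally, the surjectivity of $f$ onto a set dense in all of $\Rrr^2$, together with $f$ being a $c$-scaled isometry, forces $c=1$: a $c$-scaled isometry of a dense subset of $\Rrr^2$ onto a dense subset of $\Rrr^2$ with $c\neq 1$ would have to be a genuine similarity of the whole plane with ratio $c\ne1$, which has a fixed point and cannot be distance-non-expanding-and-non-contracting in the way the floor condition at matched levels $(\delta,\gamma)$ with the same combinatorial data requires; alternatively, one reads off $\gamma=\delta$ directly from the level data, and then $c=1$. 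Hence $f$ is an isometry, as claimed.
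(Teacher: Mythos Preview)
Your approach is genuinely different from the paper's, and the strategic idea --- use strict convexity of the Euclidean norm to show that a step-isometry preserves approximate betweenness, then bootstrap to a scaled isometry via a Mazur--Ulam--type argument --- is reasonable. But the proposal as written has real gaps at exactly the points where the work lies.

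First, the opening reduction is circular. You write that ``by continuity of the Euclidean metric, $f$ extends to a map $\bar f\colon\Rrr^2\to\Rrr^2$.'' A step-isometry is \emph{not} known to be continuous a priori: the floor condition tells you only that $d(u,v)<\delta$ implies $d(f(u),f(v))<\gamma$, which is a coarse Lipschitz bound, not continuity. You cannot extend $f$ to the closure until \emph{after} you have proved it is an isometry, so this step must be dropped.

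Second, the heart of your argument --- that the image of a nearly collinear chain is nearly collinear --- is asserted but never established with usable estimates. You correctly observe that your first chain bound is ``useless as stated,'' and the promised repair (``pair with the reverse direction'') is not carried out. One can indeed show that an $N$-step chain with $N=\lfloor D/\delta\rfloor+1$ maps to a chain of total length at most $(k+1)\gamma$ with endpoints at distance at least $k\gamma$; strict convexity then bounds the transverse deviation of each image point from the segment by roughly $\sqrt{\gamma\cdot k\gamma}=\gamma\sqrt{k}$, which \emph{grows} with $k$. Turning this into exact additivity and then exact distance preservation requires a further limiting or amplification argument that you have not supplied. The final step (forcing the scale factor $c=1$) is also left as two unresolved alternatives.

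For comparison, the paper proceeds by a direct \emph{discrepancy amplification}: define $D(x,y)=|d(x,y)-d(f(x),f(y))|$, note $D<1$ always, and show via two explicit rhombus/quadrilateral constructions (using the Pythagorean relation between the diagonals, which is where the Euclidean hypothesis enters) that any positive discrepancy can be at least doubled, after first pushing the points far apart. Iterating yields $D>1$, a contradiction. This avoids continuity, betweenness, and scale-factor issues entirely: it is purely finitary and uses only the floor inequalities and planar geometry.
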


\begin{proof}
Assume for a contradiction that there is a step-isometry
$f:V\rightarrow W$ at level $(\delta,\gamma)$ that is not an
isometry. Without loss of generality, we assume that $\delta=\gamma=1$. For
each $u\in V$, let $u'=f(u)\in W$. Since $f$ is not an isometry, there
must exist points $x_1$ and $x_2$ so that $d(x_1,x_2)\not=
d(x_1',x_2')$.  Since $f$ is a bijection, we may assume, without loss
of generality, that $d(x_1,x_2) < d(x_1',x_2')$.

The proof follows by the following two claims. Given $x,y\in V$,
define \emph{the discrepancy of} $x,y$
as $$D(x,y)=|d(x,y)-d(x',y')|.$$ The discrepancy is a measure of the
error in the distance between pairs of points and their images under
$f$. Since $f$ is a step-isometry, we have that $D(x,y) <1$ for all $x,y \in V.$

\begin{claim}\label{claim1}
For every $\epsilon >0$, if there exist points $x_1,x_2 \in V$ so
that $$D(x_1,x_2)=\epsilon>0$$ and $d(x_1,x_2)>40$, then there exist
points $x_3,x_4 \in V$ so that $$D(x_3,x_4)>2\epsilon .$$
\end{claim}

\begin{claim}\label{claim2}
If there exist points $x_1,x_2\in V$ so
that $$D(x_1,x_2)=\epsilon>0,$$ then there exist points $x_3,x_4\in V$
so that $$D(x_3,x_4)>3/4\epsilon$$ and $d(x_3,x_4)>40$.
\end{claim}

To see how the lemma follows from the claims, note that by hypothesis,
there are two points of $V$ with discrepancy $\epsilon >0$. By
Claim~\ref{claim2}, there are two points of $V$ with discrepancy at
least $3/4\epsilon,$ and with distance at least $40$ apart. By
Claim~\ref{claim1} there are points with discrepancy at least
$3/2\epsilon$ apart. By induction, we obtain a sequence of pairs of
points $\{y_i,z_i\}$ of $V$ whose discrepancy equals
$(3/2)^i\epsilon$ which tends to infinity in $i$. In
particular, there are points $y,z$ of $V$ so that $D(y,z)>1$, which
contradicts the fact that $f$ is a step-isometry.

\smallskip

We now prove Claim~\ref{claim1}. We first define some constants that
will be useful in the proof. Let
\[
m=d(x_1,x_2)/2,\text{ and } k = \lfloor m\rfloor + 1.
\]
Choose
\[
0<\xi <\epsilon^2/(2k).
\]
Since $V$ is dense in $\Rrr^2$, we can find
points $x_3$ and $x_4$ in $V$ so that
\[
k-\xi< d(x_i,x_j)< k
\text{ for }
i=1,2 \text{ and } j=3,4.
\]
So $x_1,x_3,x_2,x_4$ are the vertices of a quadrilateral whose sides have
length between $k-\xi$ and $k$. See Figure~\ref{onee}.
\begin{figure}[h]\label{onee}
\begin{center}
\epsfig{figure=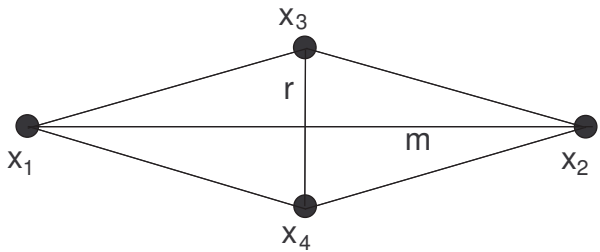}
\end{center}
\caption{The quadrilateral formed by $x_1,x_3,x_2,x_4$.}
\end{figure}

Let $r=d(x_3,x_4)/2$. The distance between $x_3$ and $x_4$ is smallest
when all sides of the quadrilateral equal $k-\xi$, so using the
Pythagorean theorem we have that
\begin{equation}
\label{eqn:r}
r^2\geq (k-\xi)^2-m^2 \geq k^2-2k\xi-m^2\geq k^2-\epsilon^2-m^2,
\end{equation}
where the last step follows from the choice of $\xi$.
(Note that the calculation above is only valid if we use the Euclidean metric.)

On the other hand, $x_3$ and $x_4$ are furthest when all sides of the
quadrilateral equal $k$. Since $k\leq m+2$ and $m>20$, we obtain
that
\begin{equation}
\label{eqn:half}
r^2\leq (m+2)^2-m^2=2m+1\leq m^2/4.
\end{equation}

Now consider the quadrilateral formed by the images
$x_1',x_2',x_3'$, and $x_4'$. Let
\[
m'=d(x_1',x_2')/2=m+\epsilon/2, \text{
  and }r'=d(x_3',x_4')/2.
\]
See Figure~3.
\begin{figure}[h!]
\begin{center}
\epsfig{figure=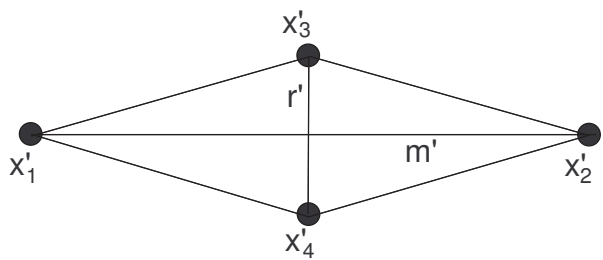}\label{fig2}
\end{center}
\caption{The quadrilateral formed by $x'_1,x'_3,x'_2,x'_4$.}
\end{figure}

Since $f$ is a step-isometry,
\[
d(x_i',x_j')<k \text{ for }i=1,2 \text{ and }
j=3,4.
\]
Now $d(x_3',x_4')$ is largest when the quadrilateral has all sides equal to
$k$. It follows that
\begin{eqnarray*}
(r')^2 &\leq & k^2-(m')^2\\
&=& k^2- (m+\epsilon/2)^2\\
&\leq & k^2-m^2-m\epsilon\\
& \leq & r^2-m\epsilon+\epsilon^2\\
&\leq & (r-\epsilon)^2
\end{eqnarray*}
where the third inequality follows from (\ref{eqn:r}), and the last
inequality follows from (\ref{eqn:half}). In particular, $r' \le r- \epsilon.$
As $$d(x_3',x_4')=2r'\leq 2r-2\epsilon=d(x_3,x_4)+2\epsilon,$$ the
proof of Claim~\ref{claim1} follows.

\smallskip

We now prove Claim~\ref{claim2}. Let $m=d(x_1,x_2),$ and assume
$m<40$. Let $k=40$. Choose $c>0$ so that
\begin{equation}\label{cc}
10c/3+c^2\leq k\epsilon/4\text{ and }c<(2-\sqrt{3})\epsilon/8.
\end{equation}
Further, choose points $x_3,x_4,x_5,x_6 \in V$ so
that
\begin{equation}
\label{upper}
k<d(x_i,x_j)< k+c,\text{ for }i,j\text{ equals }1,3\text{ or }1,5,\text{ or }2,4\text{ or }4,6,
\end{equation}
\begin{equation}
\label{lower}
k-c<d(x_i,x_j)< k,\text{ for }i,j\text{ equals }3,5\text{ or }4,6,
\end{equation}
and
\begin{equation}
\label{difference}
d(x_3,x_4)<d(x_5,x_6)<d(x_3,x_4)+c.
\end{equation}
The choice of such points
is possible since $V$ is dense. See Figure~4.
 \begin{figure}[h!]
\begin{center}
\epsfig{figure=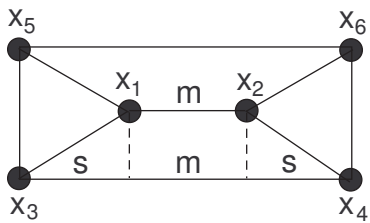}\label{fig3}
\end{center}
\caption{The figure formed by $x_i,$ $1\le i \le 6$.}
\end{figure}

Subject to the given constraints, the points $x_3$ and $x_4$ are
furthest apart when the distances achieve the upper bound of
(\ref{upper}) and the lower bound from (\ref{lower}), and when
$d(x_3,x_4)=d(x_5,x_6)$. Assume this to
be the case. Then $x_3,x_4,x_5,x_6$ form a rectangle, and the line
segments $x_1x_2$, $x_3x_4$ and $x_5x_6$ are parallel.

For $i=1,2$, let $y_i$ be the orthogonal projection of $x_i$ on the
line $x_3x_4$. Then $d(y_1,y_2)=d(x_1,x_2)=m$, and $d(x_3,y_1)=d(x_4,y_2)$; we will denote this
distance by $s$. See Figure~\ref{fig3}.

Hence, $d(x_3,x_4)=2s+m,$ and
$$
s^2 = (k+c)^2-1/4(k-c)^2 =3/4(k^2+10kc/3+c^2),
$$
and so
$$
d(x_3,x_4)=\sqrt{3(k^2+(10/3)kc+c^2)}+m.
$$
The expression above is based on the case where $x_3$ and $x_4$ are
furthest apart, and thus, it gives an upper bound for the general case.
Combining this upper bound with the condition on $c$ given by
(\ref{cc}), and with the assumption (\ref{difference}) we have
for $i,j$ equals $3,4$ and $5,6$ that
\begin{eqnarray*}
d(x_i,x_j)&
\leq & m+ c+ \sqrt{3(k^2+(10/3)kc+c^2)}\\
&\leq & m + c + \sqrt{3(k^2+ k\epsilon/4)}\\
& <& m+c+\sqrt{3}(k+\epsilon/8)\\
&< & m + \sqrt{3}k +\epsilon/4,
\end{eqnarray*}
where the first and last steps follow from (\ref{cc}).

We next consider the images of these points, and assume without loss
of generality that $d(x_3',x_4')>d(x_5',x_6')$. Since $f$ is a
step-isometry, $d(x_i',x_j')\geq k$ for $i,j$ equals $1,3$ or $1,5$
or $2,4$ or $2,6$, and $d(x_i',x_j')\leq k$ for $i,j$ equals $3,5$ or
$4,6$. Moreover, by assumption $d(x_1',x_2')=m+\epsilon $. Now $x_3'$ and $x_4'$
are closest together when $d(x_3',x_4')=d(x_5',x_6')$ and
$d(x_i',x_j')=k$ for all $i,j$ for which $|i-j|$ is even. As in the
previous case, the line segments $x_5'x_6'$, $x_1'x_2'$, and
$x_3'x_4'$ are parallel, and $x_5'x_6'x_4'x_3'$ is a rectangle. Under
these assumptions
we can compute  $d(x_3',x_4')$ similarly to the computation for $d(x_3,x_4)$, and obtain that
$$
d(x_3',x_4')= 2(k\sqrt{3}/2)+(m+\epsilon).
$$
Since our assumptions hold for the case where $x_3'$ and $x_4'$ are
closest together, we have in general, for $i,j$ equal to $3,4$ and
$5,6$, that
$$
d(x_i',x_j')\geq k\sqrt{3}+m+\epsilon\geq d(x_i,x_j)+3\epsilon/4.\qedhere
$$
\end{proof}

A direct consequence of this lemma is the existence of many
non-isomorphic g.e.c. graphs with vertex sets dense in $\Rrr^2$,
equipped with the Euclidean metric. A set $V$ in $\Rrr^2$ is $\delta$-\emph{free} if no pair of points in $V$ are distance $\delta$ apart. For instance, one may consider
$\delta = 1$, $V$ to be the set of all rational points in $\Rrr^2$ which are $\delta$-free, and $W = V \cup
\{ (2^{1/4},0) \}$ (which is also $\delta$-free). It is
straightforward to see there is no isometry from $V$ onto $W$. Hence,
a $1$-g.e.c.\ graph on $V$ cannot be isomorphic to a $1$-g.e.c.\ graph
on $W.$

\vspace{.1in}
In the following proof, we use the notation $\mathbb{P}(A)$ for the probability of an event $A.$

\begin{proof}[Proof of Theorem~\ref{non2}]
An enumeration $\{ v_i:i\in \Nnn^+\}$ of $V$ is \emph{good} if
$d(v_i,v_{i+1})<\delta$ for all $i\in \Nnn^+$ and $\{v_1,v_2,v_3\}$ are
not collinear. We claim that a countable set $V$ dense in $\Rrr$ has a good enumeration.
For a positive integer $n$, we call $\{ v_i:1\le i \le
n\}$ a \emph{partial good enumeration} of $V.$ We prove the claim
by constructing a chain of partial good enumerations
by induction. Using the density of $V,$ choose three points
$\{v_1,v_2,v_3\}$ that are not collinear, so that each are within
$\delta$ of each other. Let $V_1 =\{v_1,v_2,v_3\}.$
Enumerate $V \setminus
\{v_1,v_2,v_3\}$ as $\{u_i: i\ge 2\}.$ Starting from $V_1$, we
inductively construct a chain of partial good enumerations $V_n$,
$n\geq 1$, so that for $n\geq 2$, $V_n$ contains $\{u_i: 2\le i
\le n\}.$

We now want to form $V_{n+1}$ by adding $u=u_{n+1}$. If $u\in V_n$,
then let $V_{n+1}=V_n$. Assume without loss of generality that
$u \not\in V_n.$ Let $N=|V_n|$. If $d(v_N ,u) < \delta,$
then let $v_{N+1}=u$ and add it to $V_n$ to form $V_{n+1}.$ Otherwise, by the
density of $V,$ choose a shortest finite path $P=p_0,\dots, p_\ell$ of
points of $V\setminus V_n$
starting at $v_N=p_0$ and ending at $u=p_\ell$ so that two consecutive points
in the path are distance at most $\delta$. Then add the vertices of
$P$ to $V_n$ to form $V_{n+1}$ and enumerate them so that $v_{N+i}=p_i$ for
$i=0,1,\dots,\ell$.
Taking the limit of this chain,
$\bigcup_{n\ge 1} V_n$ is a
good enumeration of $V$, which proves the claim.

Let $V=\{ v_i:i\geq 1\}$ be a good enumeration of
$V,$ and for any $n$, let $V_n = \{ v_i: 1\le i \le n \}.$ Let $G$ and $H$ be as
stated. We say that two pairs $\{ v,w\}$ and $\{ v',w'\}$ of vertices
are \emph{compatible} if $\{ v,w\}$ are adjacent in $G$ and $\{
v',w'\}$ are adjacent in $H$ or $\{ v,w\}$ are non-adjacent in
$G$ and $\{ v',w'\}$ are non-adjacent in $H$. For two pairs $\{ v,w\}$
and $\{ v',w'\}$ such that $d(v,w)=d(v',w')$, the probability that they
are compatible equals
\[
p^*=\left\{\begin{array}{ll}
p^2+(1-p)^2 & \text{if }d(v,w)<\delta\text{' and}\\
1 &\text{otherwise.}
\end{array}\right.
\]

By Corollary~\ref{useful} and Lemma~\ref{lemnoni}, any isomorphism
between subgraphs of $G$ and $H$ must be an isometry.  The images of
three points in $\Rrr^2$ that are not collinear determine the
isometry. Let $A_n$ be the event that there exists a partial
isomorphism $f$ from $G[V_n]$ into $H$ so that
$f(\{v_1,v_2,v_3\})\subseteq V_n$, and let
$$
A^*_n=\bigcap_{\nu \geq
  n} A_{\nu}.
$$
Note that $A^*_{n}\subseteq A^*_{n+1}$ for all $n$.

Next, we estimate the probability of $A^*_n$. Note first that
$\mathbb{P}(A^*_n)\leq \mathbb{P}(A_\nu)$ for all $\nu\geq n$. For any tuple
$(u_1,u_2,u_3)$ of three distinct vertices in $V_n$, let
$C_n(u_1,u_2,u_3)$  be the event that there exists a partial
isomorphism $f$ from $G[V_n]$ to $H$ so that $f(v_i)=u_i$ for
$i=1,2,3$. Since the images of three points that are not collinear
determine the isometry $f$, if $C_n$ happens then all pairs
$(v_i,v_{i+1})$ and $(f(v_i),f(v_{i+1}))$ must be compatible, for
$1\leq i<n$. Thus, $$\mathbb{P}(C_n(u_1,u_2,u_3))\leq (p^*)^{n-1}.$$ Now
\[
A_n=\bigcup_{u_1,u_2,u_3\in V_n} C_n(u_1,u_2,u_3),
\]
so for $n\ge 3$ we have that $\mathbb{P}(A_n)\leq n^3 (p^*)^{n-1}$, and
\[
\mathbb{P}(A^*_n)\leq \inf\{ \nu^3 (p^*)^{\nu-1} : \nu\geq n\} =0.
\]

If $B$ is the event that $G$ and $H$ are
isomorphic, then $$B\subseteq \bigcup_{n\in \Nnn^+} A^*_n.$$ Since
the union of countably many sets of measure zero has measure zero, we
conclude that $\mathbb{P}(B)=0$, and thus, with probability $1,$ $G\ncong H.$ \end{proof}

\end{document}